\newtheorem{theorem}{Theorem}[section]
\newtheorem{lemma}[theorem]{Lemma}
\newtheorem{proposition}[theorem]{Proposition}
\newtheorem{corollary}[theorem]{Corollary}
\theoremstyle{definition}
\newtheorem{definition}[theorem]{Definition}
\newtheorem{example}[theorem]{Example}
\newtheorem{remark}[theorem]{Remark}
\numberwithin{equation}{section}
\DeclareMathOperator{\Po}{Po}
\DeclareMathOperator{\Ost}{Ost}
\DeclareMathOperator{\Cl}{Cl}
\DeclareMathOperator{\disc}{disc}
\DeclareMathOperator{\Ker}{Ker}
\DeclareMathOperator{\Coker}{Coker}
\DeclareMathOperator{\Inf}{Inf}
\begin{document}

\title{Ostrowski quotients for finite extensions of number fields}

\author{Ehsan Shahoseini}
\address{Department of Mathematics, Tarbiat Modares University, 14115-134, Tehran, Iran}
\curraddr{} \email{ehsan\_shahoseini@modares.ac.ir}
\thanks{}

\author{Ali Rajaei$^{*}$}
\address{Department of Mathematics, Tarbiat Modares University, 14115-134, Tehran, Iran}
\curraddr{}
\email{alirajaei@modares.ac.ir}
\thanks{$^{*}$Corresponding author}

\author{Abbas Maarefparvar$^{1}$}
\address{School of Mathematics, Institute for Research in Fundamental Sciences (IPM), P.O. Box: 19395-5746, Tehran, Iran.}
\curraddr{}
\email{a.marefparvar@ipm.ir}
\thanks{$^{1}$The third author's research was supported by a grant from IPM}
\date{}
\dedicatory{}
\commby{}

\subjclass[2010]{Primary 11R29, 11R37}

\begin{abstract}
	For $L/K$ a finite Galois extension of number fields, the  relative P\'olya group $\Po(L/K)$ coincides with the group of strongly ambiguous ideal classes in $L/K$. In this paper, using a well known exact sequence related to $\Po(L/K)$, in the works of Brumer-Rosen and Zantema, we find short proofs for some classical results in the literatur. Then we define the ``Ostrowski quotient'' $\Ost(L/K)$ as the cokernel of the capitulation map into $\Po(L/K)$, and generalize some known results for $\Po(L/\mathbb{Q})$ to $\Ost(L/K)$.
\end{abstract}

\maketitle
 
\vspace{.2cm} {\noindent \bf{Keywords:}}~ P\'olya group, relative P\'olya group,  Ostrowski quotient, Galois cohomology, capitulation problem.

\vspace{.2cm} {\noindent \bf{Notations.}}~  The following notations will be used throughout this article:

For a number field $K$, the notations $I(K)$, $P(K)$, $\Cl(K)$,  $\mathcal{O}_K$, $h_K$, $U_K$, $H(K)$, and $\Gamma(K)$ denote the group of fractional ideals, group of principal fractional ideals, ideal class group, ring of integers, class number, group of units, Hilbert class field, and genus field of $K$, respectively. 

For a finite extension $L/K$ of number fields, 
$\mathcal{N}_{L/K}:\Cl(L) \rightarrow \Cl(K)$ denotes the induced  homomorphism by the ideal norm homomorphism $N_{L/K}:I(L) \rightarrow I(K)$; and  ${\epsilon}_{L/K}:\Cl(K) \rightarrow \Cl(L)$ denotes the transfer of ideal classes induced by the homomorphism $j_{L/K}: \mathfrak{a} \in I(K)  \mapsto \mathfrak{a} \mathcal{O}_L \in  I(L)$.
Whenever $L/K$ is Galois, for a prime ideal $\mathfrak{P}$  of $K$ denote by $e_{\mathfrak{P}(L/K)}$ and  $f_{\mathfrak{P}(L/K)}$  the ramification index and residue class degree of $\mathfrak{P}$ in $L/K$, respectively.

 

\section{Introduction}
Let $L$ be an algebraic number field with ring of integers $\mathcal{O}_L$. For every prime number $p$ and every integer $f \geq 1$, the \textit{Ostrowski ideal} $\Pi_{p^f}(L)$ of $L$ is defined as follows \cite{Ostrowski}:
\begin{equation} \label{equation, Ostrowski ideals}
\Pi_{p^f}(L) :=\prod_{\substack{\mathfrak{m}\in Max(\mathcal{O}_L)\\ \mathcal{N}_{L/\mathbb{Q}}(\mathfrak{m})=p^f}} \mathfrak{m},
\end{equation}
 where by the convention, if $L$ has no ideal with norm $p^f$, then $\Pi_{p^f}(L)=\mathcal{O}_L$.
Following Zantema \cite{Zantema}, $L$ is called a \textit{P\'olya field} if all the Ostrowski ideals $\Pi_{p^f}(L)$ of $L$ for arbitrary prime powers $p^f$ are principal. As an ``obstruction measure'' for $L$ to be a P\'olya field, the notion of \textit{P\'olya-Ostrowski group} or \textit{P\'olya group} was introduced in \cite{Cahen-Chabert's book}:
\begin{definition} \label{definition, Polya group}
The P\'olya group of a number field $L$ is the subgroup $\Po(L)$ of
the class group $\Cl(L)$ generated by the classes of the Ostrowski ideals $\Pi_q(L)$.
\end{definition} 

Hence $L$ is P\'olya iff its P\'olya group is trivial. Obviously every number field with class number one is a P\'olya field, but not conversely, see e.g. Proposition \ref{proposition, Hilbert's result for Polya quadratic field} below. 

Now let $L$ be a Galois extension of $\mathbb{Q}$ with Galois group $G$. In this case, P\'olyaness of $L$ is equivalent to principality of $\prod_{i=1}^g \mathfrak{P}_i$, where $\mathfrak{P}_1,\mathfrak{P}_2,\dots,\mathfrak{P}_g$ are all distinct prime ideals of $L$ above a \textit{ramified prime} $p$, see  \cite[Section 1]{Zantema}. On the other hand, one can show that the \textit{Ostrowski  ideals} $\Pi_q(L)$ freely  generate the ambiguous ideals $I(L)^G$, see \cite[Section 2]{Brumer-Rosen}. Thus $\Po(L)$ is the subgroup of $\Cl(L)$ generated by the classes of ambiguous ideals. As the first result in this subject, P\'olya groups of quadratic fields have been described by Hilbert \cite{Hilbert's book}:

\begin{proposition} \cite[Theorem 105-106]{Hilbert's book} \label{proposition, Hilbert's result for Polya quadratic field}
Let $L$ be a quadratic field and denote the number of ramified primes in $L/\mathbb{Q}$ by $r_{L}$. Then:
\begin{equation*}
\#Po(L) =
\begin{cases}
2^{r_L-2} \,  &: \,   \text{$L$ is real with no unit of negative norm} \\
2^{r_L-1} \,  &: \,   \text{Otherwise}
\end{cases}
\end{equation*}

\end{proposition}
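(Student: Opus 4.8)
The plan is to produce an exact formula
$\#\Po(L) = 2^{r_L}/\#H^1(G,U_L)$, where $G=\Gal(L/\mathbb{Q})=\{1,\sigma\}$, and then to evaluate the Galois cohomology group $H^1(G,U_L)$ by a short case analysis that reproduces the two cases of the statement. At the outset I record that, as explained in the paragraph preceding the proposition, $\Po(L)$ is the image of the ambiguous ideals $I(L)^G$ in $\Cl(L)$. For a quadratic field the only possibly nontrivial generators are the classes of the ramified primes $\mathfrak{p}_1,\dots,\mathfrak{p}_{r_L}$: a split $p$ gives $\Pi_p(L)=p\mathcal{O}_L$ and an inert $p$ gives $\Pi_{p^2}(L)=p\mathcal{O}_L$, both principal, while each ramified $\mathfrak{p}_i$ satisfies $\mathfrak{p}_i^2=p_i\mathcal{O}_L$. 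Thus $\Po(L)$ is an elementary abelian $2$-group of rank at most $r_L$, and the whole content of the proposition is the count of relations among the $[\mathfrak{p}_i]$.

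First I would set up the two short exact sequences of $G$-modules $1\to U_L\to L^\times\to P(L)\to 1$ and $1\to P(L)\to I(L)\to\Cl(L)\to 1$ and pass to $G$-cohomology. Since $I(L)$ is a permutation module on the primes of $L$, one has $H^1(G,I(L))=0$, and the long exact sequence attached to the second identifies $\Po(L)\cong I(L)^G/P(L)^G$ (this is the Brumer--Rosen/Zantema identification announced in the abstract). To separate the two unknown orders I would factor out the subgroup $j_{L/\mathbb{Q}}(P(\mathbb{Q}))$ of principal ideals generated by rationals, which sits inside $P(L)^G\subseteq I(L)^G$. Using the explicit description of $I(L)^G$ (free on the ramified primes, the inert primes $q\mathcal{O}_L$, and the split ideals $p\mathcal{O}_L$), a direct computation shows $I(L)^G/j_{L/\mathbb{Q}}(P(\mathbb{Q}))\cong(\mathbb{Z}/2\mathbb{Z})^{r_L}$, the inert and split generators dying and each $\mathfrak{p}_i$ picking up the single relation $\mathfrak{p}_i^2=p_i\mathcal{O}_L$. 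On the other hand, the long exact sequence of the first short exact sequence, together with Hilbert's Theorem 90 ($H^1(G,L^\times)=0$) and $(L^\times)^G=\mathbb{Q}^\times$, identifies $P(L)^G/j_{L/\mathbb{Q}}(P(\mathbb{Q}))\cong H^1(G,U_L)$. Dividing orders yields $\#\Po(L)=2^{r_L}/\#H^1(G,U_L)$.

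It then remains to compute $H^1(G,U_L)=\Ker(N_{L/\mathbb{Q}})/\{\sigma(u)u^{-1}:u\in U_L\}$ for the cyclic group $G$ of order two, where $N_{L/\mathbb{Q}}(u)=u\,\sigma(u)$ is the field norm. If $L$ is imaginary then $U_L=\mu_L$ with $\sigma$ acting by inversion, so $N_{L/\mathbb{Q}}$ is trivial on $U_L$ and the image of $\sigma-1$ is $\mu_L^2$, giving $H^1(G,U_L)\cong\mu_L/\mu_L^2$ of order $2$. If $L$ is real, write $U_L=\{\pm1\}\times\langle\varepsilon\rangle$ for a fundamental unit $\varepsilon$; here the hypothesis ``no unit of negative norm'' is exactly $N_{L/\mathbb{Q}}(\varepsilon)=1$. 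When $N_{L/\mathbb{Q}}(\varepsilon)=-1$ a short calculation (using $\sigma(\varepsilon)=N_{L/\mathbb{Q}}(\varepsilon)\varepsilon^{-1}$) gives $\#H^1(G,U_L)=2$, whereas when $N_{L/\mathbb{Q}}(\varepsilon)=1$ one finds $\Ker(N_{L/\mathbb{Q}})=U_L$ with image $\langle\varepsilon^2\rangle$, so $H^1(G,U_L)\cong\bigl(\{\pm1\}\times\langle\varepsilon\rangle\bigr)/\langle\varepsilon^2\rangle$ has order $4$. Substituting into $\#\Po(L)=2^{r_L}/\#H^1(G,U_L)$ gives $2^{r_L-1}$ in the imaginary and the $N_{L/\mathbb{Q}}(\varepsilon)=-1$ cases and $2^{r_L-2}$ in the real $N_{L/\mathbb{Q}}(\varepsilon)=1$ case, which is the claim. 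The main obstacle is this last computation: one must correctly track the torsion factor $\{\pm1\}$ together with the action $\sigma(\varepsilon)=N_{L/\mathbb{Q}}(\varepsilon)\varepsilon^{-1}$ in order to see that the extra factor of $2$ in $H^1(G,U_L)$ — equivalently the extra relation among the $[\mathfrak{p}_i]$ — appears precisely when $\varepsilon$ has norm $+1$.
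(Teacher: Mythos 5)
Your argument is correct, and it follows exactly the route the paper itself indicates: Proposition \ref{proposition, Hilbert's result for Polya quadratic field} is presented there as the quadratic special case of Zantema's exact sequence \eqref{equation, Zantema's exact sequence} (equivalently, of the order formula in Corollary \ref{corollary, some consequence of the main exact sequence for RPG}(i) with $K=\mathbb{Q}$), and your first two paragraphs are precisely a self-contained derivation of that sequence in the rank-one case, yielding $\#\Po(L)=2^{r_L}/\#H^1(G,U_L)$. The paper gives no proof of its own (it cites Hilbert), but your concluding computation of $H^1(G,U_L)$ --- order $2$ for $L$ imaginary or real with a norm $-1$ fundamental unit, order $4$ for $L$ real with $N_{L/\mathbb{Q}}(\varepsilon)=1$ --- is the standard and correct way to recover Hilbert's two cases from this framework.
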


The above theorem of Hilbert has been generalized by Zantema:

\begin{proposition}  \cite[ Section 3, page 9]{Zantema} \label{proposition, Zantema's exact sequence}
Let $L/\mathbb{Q}$ be a Galois extension with Galois group $G$. Denote the ramification index of a prime $p$ in $L$ by $e_p$ .Then the following sequence is exact:
\begin{equation} \label{equation, Zantema's exact sequence}
\{0\} \longrightarrow H^1(G,U_L) \longrightarrow \bigoplus _{\text{p prime}} \mathbb{Z} / e_p \mathbb{Z} \longrightarrow \Po(L) \longrightarrow \{0\}.
\end{equation}
\end{proposition}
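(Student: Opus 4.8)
The plan is to realize all three terms of the sequence as successive subquotients of the group of ambiguous ideals $I(L)^G$, which by the discussion preceding the statement is free abelian on the Ostrowski ideals $\Pi_p(L) := \prod_{\mathfrak{P}\mid p}\mathfrak{P}$. First I would record the identification $\Po(L) \cong I(L)^G/P(L)^G$. Indeed, $\Po(L)$ is by definition the image of $I(L)^G$ under the projection $I(L)\to \Cl(L)=I(L)/P(L)$, and the kernel of this restricted map is $I(L)^G\cap P(L)=P(L)^G$, the group of principal ambiguous ideals. This gives a short exact sequence
\[ 0 \longrightarrow P(L)^G \longrightarrow I(L)^G \longrightarrow \Po(L) \longrightarrow 0. \]

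To get hold of $P(L)^G$, I would take the long exact cohomology sequence of the short exact sequence of $G$-modules $1 \to U_L \to L^* \to P(L) \to 1$, where the right-hand map is $\alpha \mapsto \alpha\mathcal{O}_L$. Since $(L^*)^G = \mathbb{Q}^*$ and Hilbert's Theorem 90 gives $H^1(G,L^*)=0$, the relevant tail reads
\[ \mathbb{Q}^* \xrightarrow{\ \alpha\,\mapsto\,\alpha\mathcal{O}_L\ } P(L)^G \longrightarrow H^1(G,U_L)\longrightarrow 0, \]
so that $P(L)^G/P_0 \cong H^1(G,U_L)$, where $P_0$ denotes the subgroup of $I(L)^G$ consisting of the ideals $\alpha\mathcal{O}_L$ with $\alpha\in\mathbb{Q}^*$.

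Finally I would locate $P_0$ explicitly inside the free group $I(L)^G$. Because $L/\mathbb{Q}$ is Galois, every rational prime factors uniformly as $p\mathcal{O}_L = \Pi_p(L)^{e_p}$, so under the identification $I(L)^G\cong\bigoplus_p \mathbb{Z}\,\Pi_p(L)$ the subgroup $P_0$ is exactly the coordinate subgroup $\bigoplus_p e_p\mathbb{Z}$; consequently $I(L)^G/P_0 \cong \bigoplus_p \mathbb{Z}/e_p\mathbb{Z}$, in which only the ramified primes contribute. Applying the third isomorphism theorem to the nested subgroups $P_0 \subseteq P(L)^G \subseteq I(L)^G$ then yields
\[ 0 \longrightarrow P(L)^G/P_0 \longrightarrow I(L)^G/P_0 \longrightarrow I(L)^G/P(L)^G \longrightarrow 0, \]
which, after substituting the three identifications obtained above, is precisely the asserted exact sequence.

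The step I expect to require the most care is the bookkeeping that ties the cohomological description of $P(L)^G$ to the combinatorial description of $I(L)^G$: one must verify that the image $P_0$ of $\mathbb{Q}^*$ emerging from the cohomology sequence really is the coordinate subgroup $\bigoplus_p e_p\mathbb{Z}$, and this rests essentially on the uniform ramification $p\mathcal{O}_L=\Pi_p(L)^{e_p}$ that holds only because the extension is Galois. The purely cohomological inputs — computing $(L^*)^G=\mathbb{Q}^*$ and invoking Hilbert 90 to kill $H^1(G,L^*)$ — are standard and should cause no difficulty.
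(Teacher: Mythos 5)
Your argument is correct and is essentially the standard derivation of this sequence; the paper itself supplies no proof here (it simply cites Zantema), but your three identifications --- $\Po(L)\cong I(L)^G/P(L)^G$, then $P(L)^G/P_0\cong H^1(G,U_L)$ via Hilbert 90 applied to $1\to U_L\to L^*\to P(L)\to 1$, and finally $I(L)^G/P_0\cong\bigoplus_p\mathbb{Z}/e_p\mathbb{Z}$ from $p\mathcal{O}_L=\Pi_p(L)^{e_p}$ --- are exactly the ingredients underlying the Brumer--Rosen/Zantema argument and reappear in the paper's own later computations (e.g.\ the proof of part $(ii)$ of Theorem \ref{theorem, generalization of Hilbert's theorem 94}, which uses $\Po(L/K)=I(L)^G/P(L)^G$ and the cohomology of $0\to P(L)\to I(L)\to \Cl(L)\to 0$). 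I see no gaps: in particular you correctly isolate the one point needing care, namely that Galoisness forces each rational prime to contribute a single free generator $\Pi_p(L)=\prod_{\mathfrak{P}\mid p}\mathfrak{P}$ of $I(L)^G$ with $p\mathcal{O}_L=\Pi_p(L)^{e_p}$, so that the image $P_0$ of $\mathbb{Q}^*$ is the coordinate subgroup $\bigoplus_p e_p\mathbb{Z}$.
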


\begin{remark} \label{remark, introducing BRZ notion}
In \cite{MR2} we called \eqref{equation, Zantema's exact sequence} ``Zantema's exact sequence'', however a version of this sequence can be seen in \cite{Brumer-Rosen} and therefore we call a generalization of this exact sequence the ``Brumer-Rosen-Zantema'' exact sequence or shortly ``BRZ'', see Theorem \ref{theorem, generalization of the Zantema's exact sequence} below.
\end{remark}

\section{Relative P\'olya group}
Recently, the notion of P\'olya group has been generalized to \textit{Relative P\'olya group} (independently in \cite{ChabertI, MaarefparvarThesis}) in the following sense:

\begin{definition} \label{definition, relative Polya group} 
Let $L/K$ be a finite extension of number fields. The \textit{relative P\'olya group} of $L$ over $K$, is the subgroup of $\Cl(L)$ generated by the classes of the \textit{relative Ostrowski ideals}:   
\begin{align*}
\Pi_{\mathfrak{P}^f}(L/K):=\prod_{\substack{\mathfrak{M}\in Max(\mathcal{O}_L)\\ N_{L/ K}(\mathfrak{M})=\mathfrak{P}^f}} \mathfrak{M},
\end{align*}
 where $\mathfrak{P}$ is a prime ideal of $K$, $f$ is a positive integer and by the convention, if $L$ has no ideal with relative norm $\mathfrak{P}^f$ (over $K$), then $\Pi_{\mathfrak{P}^f}(L/K)=\mathcal{O}_L$.
We denote the relative P\'olya group of $L$ over $K$ by $\Po(L/K)$. In particular, $\Po(L/\mathbb{Q})=\Po(L)$ and $\Po(L/L)=\Cl(L)$.
\end{definition}

\begin{theorem} \cite{ChabertI,MR2} \label{theorem, generalization of the Zantema's exact sequence}
Let $L/K$ be a finite Galois extension of number fields with Galois group $G$. Then the following sequence is exact:
{\small
\begin{equation*} \label{equation, main exact sequence}
	\tag{BRZ} \qquad 
\{0 \} \rightarrow \Ker({\epsilon}_{L/K}) \rightarrow H^1(G,U_L) \rightarrow\bigoplus_{\mathfrak{P}} \frac{\mathbb{Z}}{e_{\mathfrak{P}(L/K)}\mathbb{Z}} \rightarrow \frac{\Po(L/K)}{{\epsilon}_{L/K}(\Cl(K))} \rightarrow \{0 \}.
\end{equation*}}

\end{theorem}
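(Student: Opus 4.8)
The plan is to derive \eqref{equation, main exact sequence} from the snake lemma applied to a commutative ladder of short exact sequences, using Galois cohomology only to identify a single cokernel. First I would recall the two fundamental exact sequences of $G$-modules attached to $L$, namely $1 \to U_L \to L^{\times} \to P(L) \to 1$ and $1 \to P(L) \to I(L) \to \Cl(L) \to 1$. Taking $G$-invariants of the first and invoking Hilbert's Theorem~90, i.e. $H^1(G,L^{\times})=0$, together with $(L^{\times})^{G}=K^{\times}$ and $U_L^{G}=U_K$, yields an isomorphism $H^1(G,U_L) \cong P(L)^{G}/j_{L/K}(P(K))$, since the image of $K^{\times}$ in $P(L)^{G}$ consists exactly of the principal ideals $\alpha\mathcal{O}_L=j_{L/K}(\alpha\mathcal{O}_K)$ extended from $K$. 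This is the one genuinely cohomological input.

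Next I would compute the ambiguous ideals. Because $G$ permutes transitively the primes $\mathfrak{M}$ of $L$ lying above each prime $\mathfrak{P}$ of $K$, the module $I(L)$ is a direct sum of permutation modules, so $I(L)^{G}$ is free abelian on the relative Ostrowski ideals $\Pi_{\mathfrak{P}}=\prod_{\mathfrak{M}\mid\mathfrak{P}}\mathfrak{M}$. In the Galois case $\mathfrak{P}\mathcal{O}_L=\Pi_{\mathfrak{P}}^{\,e_{\mathfrak{P}(L/K)}}$, so the image $j_{L/K}(I(K))$ is generated by the $e_{\mathfrak{P}(L/K)}$-th powers of the $\Pi_{\mathfrak{P}}$, whence $I(L)^{G}/j_{L/K}(I(K))\cong\bigoplus_{\mathfrak{P}}\mathbb{Z}/e_{\mathfrak{P}(L/K)}\mathbb{Z}$. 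This identifies the middle term of the target sequence as a cokernel. I would also record that $\epsilon_{L/K}(\Cl(K))\subseteq\Po(L/K)$, since $\epsilon_{L/K}([\mathfrak{a}])=[\mathfrak{a}\mathcal{O}_L]$ is a product of classes of the Ostrowski ideals $\Pi_{\mathfrak{P}}$.

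The core step is to assemble the commutative ladder whose top row is the short exact sequence $1 \to P(K)\to I(K)\to\Cl(K)\to 1$ and whose bottom row is $1 \to P(L)^{G}\to I(L)^{G}\to\Po(L/K)\to 1$, with vertical maps $j_P\colon P(K)\to P(L)^{G}$ and $j_I\colon I(K)\to I(L)^{G}$ induced by $j_{L/K}$, and $\bar\epsilon_{L/K}\colon\Cl(K)\to\Po(L/K)$ the transfer of classes with codomain restricted to $\Po(L/K)$. The bottom row is exact precisely because $P(L)^{G}$ is the kernel of $I(L)^{G}\to\Cl(L)$ and $\Po(L/K)$ is by definition the image of $I(L)^{G}$ in $\Cl(L)$, and the squares commute because extension of ideals is compatible with the transfer of classes. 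Applying the snake lemma then produces the exact sequence
\[
0 \to \Ker(j_P)\to \Ker(j_I)\to \Ker(\bar\epsilon_{L/K})\to \Coker(j_P)\to \Coker(j_I)\to \Coker(\bar\epsilon_{L/K})\to 0.
\]
Here $j_P$ and $j_I$ are injective (extension of ideals is injective), so the sequence begins with $\Ker(\bar\epsilon_{L/K})=\Ker(\epsilon_{L/K})$; substituting $\Coker(j_P)\cong H^1(G,U_L)$, $\Coker(j_I)\cong\bigoplus_{\mathfrak{P}}\mathbb{Z}/e_{\mathfrak{P}(L/K)}\mathbb{Z}$, and $\Coker(\bar\epsilon_{L/K})=\Po(L/K)/\epsilon_{L/K}(\Cl(K))$ gives exactly \eqref{equation, main exact sequence}.

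I expect the main obstacle to be bookkeeping rather than conceptual: one must check that the maps produced by the snake lemma coincide with the natural ones, in particular that the connecting map $\Ker(\epsilon_{L/K})\to H^1(G,U_L)$ sends a class $[\mathfrak{a}]$ with $\mathfrak{a}\mathcal{O}_L=(\alpha)$ principal to the cocycle $\sigma\mapsto\sigma(\alpha)/\alpha$, and that $\bigoplus_{\mathfrak{P}}\mathbb{Z}/e_{\mathfrak{P}(L/K)}\mathbb{Z}\to\Po(L/K)/\epsilon_{L/K}(\Cl(K))$ is induced by $\Pi_{\mathfrak{P}}\mapsto[\Pi_{\mathfrak{P}}]$. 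Verifying the exactness of the bottom row of the ladder and the identification of $H^1(G,U_L)$ with $\Coker(j_P)$ are the two places where genuine care is needed; the remainder is a routine diagram chase.
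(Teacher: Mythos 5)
Your proof is correct and follows essentially the same route as the proof in the cited sources \cite{ChabertI,MR2} (the paper itself states (BRZ) without proof): the snake lemma applied to the ladder comparing $1 \to P(K) \to I(K) \to \Cl(K) \to 1$ with $1 \to P(L)^G \to I(L)^G \to \Po(L/K) \to 1$, with Hilbert's Theorem 90 identifying $P(L)^G/j_{L/K}(P(K))$ with $H^1(G,U_L)$ and the freeness of $I(L)^G$ on the relative Ostrowski ideals giving the middle term. All the identifications you flag as needing care (injectivity of $j_P$ and $j_I$, exactness of the bottom row via $\Po(L/K)=I(L)^G/P(L)^G$, and the description of the connecting map) do go through as you describe.
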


We summarize some  interesting consequences of Theorem \ref{theorem, generalization of the Zantema's exact sequence} (for more results see \cite{ChabertI,MR2}):

\begin{corollary}  \label{corollary, some consequence of the main exact sequence for RPG}
For $L/K$ a finite Galois extension of number fields with Galois group $G$, the following assertions hold:

\begin{itemize}
\item[(i)]  
\cite[Proposition 4.4]{ChabertI} $\#\Po(L/K)=\frac{h_K. \prod_{\mathfrak{P}} e_{\mathfrak{P}(L/K)}}{\#H^1(G,U_L)}$.


\item[(ii)]  \cite[Remark 2.3]{MR2}
If $\gcd(h_L,[L:K])=1$, then $\Po(L/K)={\epsilon}_{L/K}(\Cl(K))$. 

\item[(iii)] \cite[Corollary 2.4]{MR2}
 If $gcd(h_K,[L:K])=1$ then $\Cl(K) \hookrightarrow \Po(L/K)$. In particular, $h_K \mid h_L$. Moreover, the following sequence is exact:
\begin{equation*}
\{0\} \rightarrow H^1(G,U_L)  \rightarrow \bigoplus_{\mathfrak{P} | \disc(L/K)} \mathbb{Z}/e_{\mathfrak{P}} \mathbb{Z} \rightarrow \Po(L/K)/\Cl(K) \rightarrow \{ 0\}.
\end{equation*}

\item[(iv)]  \cite[Corollary 2.9]{MR2}
If every ideal class of $K$ extended to $L$ is principal and all finite places of $K$ are unramified in $L$, then $\Cl(K) \simeq H^1(G, U_L)$ and $\Po(L/K) = \{0\}$. In particular, $\Po(H(K)/K)$ is trivial, for the Hilbert class field $H(K)$ of $K$.
\end{itemize}
\end{corollary}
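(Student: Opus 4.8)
The plan is to read off all four statements from the single exact sequence (BRZ) of Theorem \ref{theorem, generalization of the Zantema's exact sequence}, combined with two standard facts: that each ramification index $e_{\mathfrak{P}(L/K)}$ divides $[L:K]$, and that the composite $\mathcal{N}_{L/K}\circ\epsilon_{L/K}$ is multiplication by $[L:K]$ on $\Cl(K)$ (because $N_{L/K}(\mathfrak{a}\mathcal{O}_L)=\mathfrak{a}^{[L:K]}$). Throughout I would abbreviate $M:=\bigoplus_{\mathfrak{P}}\mathbb{Z}/e_{\mathfrak{P}(L/K)}\mathbb{Z}$, noting that $\#M=\prod_{\mathfrak{P}}e_{\mathfrak{P}(L/K)}$ and that $M$ is annihilated by $[L:K]$.

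For part (i), I would invoke the multiplicativity of cardinalities along the four-term exact sequence: the alternating product of orders is $1$, which gives $\#\Ker(\epsilon_{L/K})\cdot \#M = \#H^1(G,U_L)\cdot \#\bigl(\Po(L/K)/\epsilon_{L/K}(\Cl(K))\bigr)$. Since $\#\epsilon_{L/K}(\Cl(K))=h_K/\#\Ker(\epsilon_{L/K})$, the quotient on the right has order $\#\Po(L/K)\cdot\#\Ker(\epsilon_{L/K})/h_K$; substituting this and cancelling the common factor $\#\Ker(\epsilon_{L/K})$ yields the stated formula. The only thing to verify is that $\#\Ker(\epsilon_{L/K})$ genuinely cancels, which it does.

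For parts (ii) and (iii), the arguments are coprimality observations. By exactness the quotient $\Po(L/K)/\epsilon_{L/K}(\Cl(K))$ is a quotient of $M$, hence annihilated by $[L:K]$; it is also a subquotient of $\Cl(L)$, so its order divides $h_L$. When $\gcd(h_L,[L:K])=1$ a finite abelian group with both properties is trivial, giving $\Po(L/K)=\epsilon_{L/K}(\Cl(K))$, which is (ii). For (iii), the identity $\mathcal{N}_{L/K}\circ\epsilon_{L/K}=[L:K]$ shows that when $\gcd(h_K,[L:K])=1$ multiplication by $[L:K]$ is an automorphism of $\Cl(K)$, so $\epsilon_{L/K}$ is injective and $\Ker(\epsilon_{L/K})=\{0\}$; hence $\Cl(K)\cong\epsilon_{L/K}(\Cl(K))\hookrightarrow\Po(L/K)$ and $h_K\mid\#\Po(L/K)\mid h_L$. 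Feeding $\Ker(\epsilon_{L/K})=\{0\}$ back into (BRZ) and discarding the unramified primes (for which $e_{\mathfrak{P}(L/K)}=1$, contributing nothing to $M$) leaves exactly the displayed three-term sequence.

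For part (iv), I would specialize (BRZ) directly: total capitulation means $\epsilon_{L/K}(\Cl(K))=\{0\}$, so $\Ker(\epsilon_{L/K})=\Cl(K)$, while unramifiedness at all finite places forces every $e_{\mathfrak{P}(L/K)}=1$, hence $M=\{0\}$. The sequence collapses to $\{0\}\to\Cl(K)\to H^1(G,U_L)\to\{0\}$ together with $\Po(L/K)/\{0\}=\{0\}$, giving both the isomorphism and the triviality. The statement about $H(K)$ is then immediate, since $H(K)/K$ is abelian (hence Galois) and unramified at all finite places, and the principal ideal theorem guarantees total capitulation, so both hypotheses hold with $L=H(K)$. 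I do not expect a genuine obstacle: the content lies entirely in Theorem \ref{theorem, generalization of the Zantema's exact sequence}, and the work reduces to the order bookkeeping of (i) and the two elementary coprimality remarks. The single point demanding care is keeping $\Po(L/K)$ distinct from its quotient $\Po(L/K)/\epsilon_{L/K}(\Cl(K))$, since it is the latter, not $\Po(L/K)$ itself, that appears as the cokernel in (BRZ).
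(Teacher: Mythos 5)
Your proposal is correct and follows exactly the route the paper intends: the paper states these four assertions as ``consequences'' of the (BRZ) exact sequence of Theorem \ref{theorem, generalization of the Zantema's exact sequence}, leaving the details to the cited references, and your order bookkeeping for (i), coprimality arguments for (ii)--(iii) (including the standard fact that $\mathcal{N}_{L/K}\circ\epsilon_{L/K}$ is the $[L:K]$-power map, which the paper itself uses elsewhere), and direct specialization for (iv) supply precisely those details. Your closing caution about distinguishing $\Po(L/K)$ from the cokernel $\Po(L/K)/\epsilon_{L/K}(\Cl(K))$ is well placed, since the containment $\epsilon_{L/K}(\Cl(K))\subseteq\Po(L/K)$ needed throughout is exactly the Galois-case fact recorded in Remark \ref{remark, if L/K is Galois then epsilon of Cl(K) is contained in relative Polya group}.
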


\begin{remark}
Indeed classically, $\#Po(L/K)$ can be obtained also from Brumer-Rosen results in \cite[Lemma 2.1 and Proposition 2.2]{Brumer-Rosen}.
\end{remark}

\begin{lemma} \cite[Lemma 2.10]{MR2} \label{lemma, for P sub N sub M, if M/N is Galois are Galois, then Po(M/P) is contained in Po(M/N)}
Let $F \subseteq K \subseteq L$ be a tower of finite extensions of number fields. 

\begin{itemize}
\item[$(i)$]
If $K/F$ and $L/F$ are Galois extensions, then $\epsilon_{L/K}(\Po(K/F)) \subseteq \Po(L/F)$.
\item[$(ii)$]
If $L/K$ is a Galois extension, then $\Po(L/F) \subseteq \Po(L/K)$.
\end{itemize}

\end{lemma}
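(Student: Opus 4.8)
\emph{The plan.} Both inclusions reduce to a computation with the generators of the relative P\'olya groups, and the engine is the Galois fact that over a fixed prime all primes share one residue degree and one ramification index. I would begin by recording the shape of the generators. If $M/N$ is a finite Galois extension and $\mathfrak{q}$ is a prime of $N$, then every prime $\mathfrak{Q}$ of $M$ over $\mathfrak{q}$ has residue degree $f_{\mathfrak{q}(M/N)}$, so $N_{M/N}(\mathfrak{Q}) = \mathfrak{q}^{f_{\mathfrak{q}(M/N)}}$; hence the only nontrivial relative Ostrowski ideal over $\mathfrak{q}$ is
\[ \mathfrak{q}_M := \Pi_{\mathfrak{q}^{f_{\mathfrak{q}(M/N)}}}(M/N) = \prod_{\mathfrak{Q}\mid\mathfrak{q}}\mathfrak{Q}, \]
and $\Po(M/N)$ is generated by the classes $[\mathfrak{q}_M]$ as $\mathfrak{q}$ ranges over the primes of $N$. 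I will apply this to each relevant layer; note that in $(i)$ the hypotheses make all of $L/F$, $L/K$, $K/F$ Galois, since $L/F$ Galois already forces $L/K$ Galois.

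For $(i)$ it is enough to check $\epsilon_{L/K}([\mathfrak{p}_K])\in\Po(L/F)$ for every prime $\mathfrak{p}$ of $F$. Extending ideals and regrouping primes,
\[ \mathfrak{p}_K\mathcal{O}_L = \prod_{\mathfrak{P}\mid\mathfrak{p}}\mathfrak{P}\mathcal{O}_L = \prod_{\mathfrak{P}\mid\mathfrak{p}}\mathfrak{P}_L^{\,e_{\mathfrak{P}(L/K)}}. \]
The point to settle --- and the one mild obstacle --- is that $e_{\mathfrak{P}(L/K)}$ is the same for all primes $\mathfrak{P}$ of $K$ above $\mathfrak{p}$. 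Here I would argue that, $K/F$ being Galois, the field $K$ is stable under $\Gal(L/F)$, so each $g\in\Gal(L/F)$ restricts to an automorphism fixing $\mathcal{O}_K$ and therefore preserves ramification indices over $K$; since $L/F$ is Galois, $\Gal(L/F)$ permutes the primes above $\mathfrak{p}$ transitively, forcing a common value $e$. Then $\mathfrak{p}_K\mathcal{O}_L = \bigl(\prod_{\mathfrak{P}\mid\mathfrak{p}}\mathfrak{P}_L\bigr)^{e} = \mathfrak{p}_L^{\,e}$, whence $\epsilon_{L/K}([\mathfrak{p}_K]) = [\mathfrak{p}_L]^{e}\in\Po(L/F)$. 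As the $[\mathfrak{p}_K]$ generate $\Po(K/F)$, this proves $(i)$.

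For $(ii)$ I would show $[\Pi_{\mathfrak{p}^f}(L/F)]\in\Po(L/K)$ for each prime $\mathfrak{p}$ of $F$ and each $f$. Writing $\mathfrak{P} = \mathfrak{Q}\cap\mathcal{O}_K$ and using $N_{L/F} = N_{K/F}\circ N_{L/K}$, we get $f_{\mathfrak{Q}(L/F)} = f_{\mathfrak{Q}(L/K)}\,f_{\mathfrak{P}(K/F)}$. Because $L/K$ is Galois, $f_{\mathfrak{Q}(L/K)}$ depends only on $\mathfrak{P}$, so $f_{\mathfrak{Q}(L/F)}$ does as well; hence the defining condition $N_{L/F}(\mathfrak{Q}) = \mathfrak{p}^f$ selects, for each $\mathfrak{P}\mid\mathfrak{p}$, either all primes $\mathfrak{Q}\mid\mathfrak{P}$ or none. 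Grouping accordingly,
\[ \Pi_{\mathfrak{p}^f}(L/F) = \prod_{\substack{\mathfrak{P}\mid\mathfrak{p}\\ f_{\mathfrak{Q}(L/F)}=f}}\ \prod_{\mathfrak{Q}\mid\mathfrak{P}}\mathfrak{Q} = \prod_{\substack{\mathfrak{P}\mid\mathfrak{p}\\ f_{\mathfrak{Q}(L/F)}=f}}\mathfrak{P}_L, \]
a product of generators of $\Po(L/K)$ (the condition under the outer product depends only on $\mathfrak{P}$). Thus its class lies in $\Po(L/K)$, and since these classes generate $\Po(L/F)$ we obtain $\Po(L/F)\subseteq\Po(L/K)$. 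The only thing to keep an eye on is that no Galois hypothesis on $L/F$ or $K/F$ is used here --- only that $L/K$ is Galois, exactly as assumed.
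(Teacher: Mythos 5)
Your proof is correct. Note that the paper itself gives no argument for this lemma --- it is quoted from \cite[Lemma 2.10]{MR2} --- so the only comparison available is with the route suggested by the paper's own machinery, namely the identification $\Po(L/K)=I(L)^{G}/P(L)^{G}$ for $G=\Gal(L/K)$ (used in the proof of Theorem 2.10(ii)): from that viewpoint part $(ii)$ is almost immediate, since each $\Pi_{\mathfrak{p}^f}(L/F)$ is visibly a $\Gal(L/K)$-invariant ideal of $L$ and hence its class lies in the group generated by classes of ambiguous ideals, and part $(i)$ follows similarly by noting that $\mathfrak{p}_K\mathcal{O}_L$ is $\Gal(L/F)$-invariant. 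Your argument instead works generator by generator, using multiplicativity of $e$ and $f$ in the tower; it is more elementary and has the virtue of making explicit exactly where each Galois hypothesis enters. You correctly isolated the one genuine subtlety in $(i)$ --- that $e_{\mathfrak{P}(L/K)}$ is constant over the primes $\mathfrak{P}\mid\mathfrak{p}$, without which $\prod_{\mathfrak{P}\mid\mathfrak{p}}\mathfrak{P}_L^{\,e_{\mathfrak{P}(L/K)}}$ would not be a power of the single generator $\mathfrak{p}_L$ of $\Po(L/F)$ over $\mathfrak{p}$ --- and your conjugation argument settles it (alternatively, $e_{\mathfrak{Q}(L/F)}=e_{\mathfrak{Q}(L/K)}e_{\mathfrak{P}(K/F)}$ with the left side and the second factor constant forces the first factor constant). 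Part $(ii)$ is likewise sound: the observation that $f_{\mathfrak{Q}(L/F)}$ depends only on $\mathfrak{P}=\mathfrak{Q}\cap\mathcal{O}_K$ is exactly what lets the defining set of $\Pi_{\mathfrak{p}^f}(L/F)$ decompose into full fibres over primes of $K$, each of which is a generator of $\Po(L/K)$.
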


\begin{remark}
Note that if either $K/F$ or $L/F$ is not Galois, then the containment in the part $(i)$ in Lemma \ref{lemma, for P sub N sub M, if M/N is Galois are Galois, then Po(M/P) is contained in Po(M/N)}  might not hold. For instance, consider the pure cubic field $K=\mathbb{Q}(\sqrt[3]{19})$. One can show that the Galois closure $L$ of $K$ is a P\'olya field while $\Po(K)=\Cl(K)\simeq \mathbb{Z}/3 \mathbb{Z}$, see \cite[Example 2.9]{MR1}. On the other hand, since $gcd(h_K,[L:K])=1$, $\mathcal{N}_{L/K} \, \circ \, {\epsilon}_{L/K}:\bar{\mathfrak{a}} \in \Cl(K) \mapsto \bar{\mathfrak{a}}^{[L:K]} \in \Cl(K)$ is injective, hence so is ${\epsilon}_{L/K}$. Therefore ${\epsilon}_{L/K}(\Po(K))=\Po(K)=\Cl(K) \not \subseteq \Po(L)$. 
\end{remark}

\subsection{Some Applications}
In this section, using the exact sequence \eqref{equation, main exact sequence} we find easier and shorter  proofs for some results in the literature. 
We begin  with the following result of Tannaka:
\begin{proposition}  \cite[Theorem 8]{Tannaka} \label{proposition, Tannaka result}
For a number field $K$, one has
\begin{equation} \label{equation, isomorphism between H^1 and Galios group for the Hilbert class field}
H^1(Gal(H(K)/K),U_{H(K)}) \simeq Gal(H(K)/K),
\end{equation}
where $H(K)$ denotes the Hilbert class field  of $K$.
\end{proposition}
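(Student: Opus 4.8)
The plan is to apply the \eqref{equation, main exact sequence} exact sequence of Theorem \ref{theorem, generalization of the Zantema's exact sequence} to the extension $L = H(K)$ over $K$ and to exploit the two defining features of the Hilbert class field. Set $G = \Gal(H(K)/K)$. Since $H(K)$ is the maximal unramified abelian extension of $K$, every finite prime $\mathfrak{P}$ of $K$ is unramified in $L$, so $e_{\mathfrak{P}(L/K)} = 1$ for all $\mathfrak{P}$; consequently the middle direct sum $\bigoplus_{\mathfrak{P}} \mathbb{Z}/e_{\mathfrak{P}(L/K)}\mathbb{Z}$ in \eqref{equation, main exact sequence} is trivial. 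The sequence then collapses to an isomorphism
\[
\Ker(\epsilon_{L/K}) \simeq H^1(G, U_L).
\]

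Next I would identify $\Ker(\epsilon_{L/K})$, where $\epsilon_{L/K}\colon \Cl(K) \to \Cl(L)$ is the capitulation (extension-of-ideals) map. By the Principal Ideal Theorem of class field theory, every ideal of $K$ becomes principal in $H(K)$; that is, $\epsilon_{L/K}$ is the zero map, whence $\Ker(\epsilon_{L/K}) = \Cl(K)$ in its entirety. Substituting this into the isomorphism above yields $\Cl(K) \simeq H^1(G, U_{H(K)})$. This is precisely the content already recorded in Corollary \ref{corollary, some consequence of the main exact sequence for RPG}(iv), whose two hypotheses (total capitulation and unramifiedness at all finite places) are exactly the two properties of $H(K)$ just invoked; so one may also cite that corollary directly instead of re-running the sequence.

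Finally, Artin reciprocity furnishes the canonical isomorphism $\Cl(K) \simeq \Gal(H(K)/K)$ between the ideal class group and the Galois group of the Hilbert class field. Composing the two isomorphisms gives
\[
H^1(\Gal(H(K)/K), U_{H(K)}) \simeq \Cl(K) \simeq \Gal(H(K)/K),
\]
as claimed.

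The only genuinely deep input is the Principal Ideal Theorem (Hauptidealsatz); everything else is bookkeeping with \eqref{equation, main exact sequence} and Artin reciprocity. I therefore expect the main obstacle to be nothing more than correctly justifying the vanishing of the capitulation map $\epsilon_{L/K}$ — the step where the full strength of class field theory is concealed — after which the collapse of the exact sequence and the final identification are routine.
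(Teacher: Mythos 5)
Your argument is correct and is essentially identical to the paper's own proof: both collapse the \eqref{equation, main exact sequence} sequence for the unramified extension $H(K)/K$ to get $\Ker(\epsilon_{H(K)/K}) \simeq H^1(G,U_{H(K)})$, identify the kernel as all of $\Cl(K)$ via the principal ideal theorem, and finish with Artin reciprocity. No gaps.
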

\begin{proof}
By the principal ideal theorem, $\Ker(\epsilon_{H(K)/K})=\Cl(K)$. Since $H(K)/K$ is an unramified extension,  
using the isomorphism induced by Artin reciprocity, 
and the exact sequence \eqref{equation, main exact sequence}, we find
\begin{equation*}
Gal(H(K)/K) \simeq \Cl(K)=\Ker(\epsilon_{H(K)/K}) \simeq H^1(Gal(H(K)/K),U_{H(K)}).
\end{equation*}
\end{proof}

\begin{remark}
Isomorphism \eqref{equation, isomorphism between H^1 and Galios group for the Hilbert class field} also has been  obtained  independently by Khare and Prasad, see \cite[$\S$4, Corollary 3]{Khare}. They mention that this isomorphism is equivalent to the principal ideal theorem, see \cite[$\S$4, Remark 3]{Khare}.
\end{remark}

A similar statement holds for \textit{any} finite extension of the Hilbert class field, due to Iwasawa:

\begin{proposition}\cite{Iwasawa} \label{proposition, Iwasawa result for finite extension of Hilbert class field}
For $L/K$ a  finite Galois extension of number fields which is unramified at all finite places, if $H(K) \subseteq L$ then
\begin{equation*}
H^1(Gal(L/K),U_L) \simeq Cl(K).
\end{equation*}
\end{proposition}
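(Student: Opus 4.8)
The plan is to feed the two hypotheses directly into the sequence \eqref{equation, main exact sequence} of Theorem~\ref{theorem, generalization of the Zantema's exact sequence} and then identify the resulting kernel with $\Cl(K)$ via the principal ideal theorem, in close analogy with the proof of Proposition~\ref{proposition, Tannaka result}. Since $L/K$ is unramified at all finite places, the ramification index $e_{\mathfrak{P}(L/K)}$ equals $1$ for every prime $\mathfrak{P}$ of $K$, so each summand $\mathbb{Z}/e_{\mathfrak{P}(L/K)}\mathbb{Z}$ is trivial and the middle direct sum in \eqref{equation, main exact sequence} vanishes. The four-term sequence then collapses to
\begin{equation*}
\{0\} \longrightarrow \Ker(\epsilon_{L/K}) \longrightarrow H^1(\Gal(L/K),U_L) \longrightarrow \{0\},
\end{equation*}
whose exactness forces the middle arrow to be both injective and surjective, yielding the isomorphism $\Ker(\epsilon_{L/K}) \simeq H^1(\Gal(L/K),U_L)$.

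It remains to show $\Ker(\epsilon_{L/K}) = \Cl(K)$, i.e.\ that the transfer $\epsilon_{L/K}$ is the trivial map. Here I would use the tower $K \subseteq H(K) \subseteq L$ supplied by the hypothesis $H(K) \subseteq L$. Because extension of ideals is transitive, $j_{L/K} = j_{L/H(K)} \circ j_{H(K)/K}$, and passing to ideal classes gives the factorization $\epsilon_{L/K} = \epsilon_{L/H(K)} \circ \epsilon_{H(K)/K}$. By the principal ideal theorem every ideal class of $K$ becomes principal in $H(K)$, so $\epsilon_{H(K)/K}$ is trivial; hence the composite $\epsilon_{L/K}$ is trivial as well, and $\Ker(\epsilon_{L/K}) = \Cl(K)$. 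Combining this with the previous step gives $H^1(\Gal(L/K),U_L) \simeq \Cl(K)$, as desired.

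The argument is essentially a direct application of the BRZ machinery, so I do not expect a serious obstacle; the only points warranting care are that the direct sum genuinely vanishes (which uses unramifiedness at \emph{all} finite primes of $K$, not merely those dividing the relative discriminant) and the functoriality of the transfer in the tower $K \subseteq H(K) \subseteq L$. Both are routine. As a cross-check, one could instead deduce the result from part~(iv) of Corollary~\ref{corollary, some consequence of the main exact sequence for RPG}: the containment $H(K)\subseteq L$ makes every ideal class of $K$ extended to $L$ principal, and all finite places of $K$ are unramified in $L$ by assumption, so the hypotheses of (iv) are met and it returns $\Cl(K)\simeq H^1(\Gal(L/K),U_L)$ directly.
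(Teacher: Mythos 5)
Your argument is correct and is essentially the paper's own proof: the authors likewise factor $\epsilon_{L/K}=\epsilon_{L/H(K)}\circ\epsilon_{H(K)/K}$, invoke the principal ideal theorem to get $\Ker(\epsilon_{L/K})=\Cl(K)$, and then read the isomorphism off the \eqref{equation, main exact sequence} sequence, whose middle sum vanishes by unramifiedness at all finite places. Your cross-check via part~(iv) of Corollary~\ref{corollary, some consequence of the main exact sequence for RPG} is a valid alternative but not what the paper does.
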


\begin{proof}
Since $\epsilon_{L/K}=\epsilon_{L/H(K)} \, \circ \,  \epsilon_{H(K)/K}$, by the principal ideal theorem we have $\Ker(\epsilon_{L/K})=\Cl(K)$. The exact sequence \eqref{equation, main exact sequence} yields the claim.
\end{proof}

In a more general case (without considering the Hilbert class field), Iwasawa \cite{Iwasawa} and Khare-Prasad \cite[$\S$ 4, Proposition 2]{Khare} independently proved that:
\begin{proposition}  \label{proposition, Iwasawa-Khare-Prasad result}
Let $L/K$ be a  finite Galois extension of number fields that is unramified at all finite places. Then $\Ker(\epsilon_{L/K}) \simeq H^1(Gal(L/K),U_L)$.
\end{proposition}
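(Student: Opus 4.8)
The plan is to apply the Brumer--Rosen--Zantema exact sequence \eqref{equation, main exact sequence} of Theorem \ref{theorem, generalization of the Zantema's exact sequence} directly, in the same spirit as the proofs of Propositions \ref{proposition, Tannaka result} and \ref{proposition, Iwasawa result for finite extension of Hilbert class field}, but now exploiting the unramifiedness hypothesis to kill the middle term rather than computing the kernel via the principal ideal theorem.

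First I would observe that the hypothesis ``$L/K$ is unramified at all finite places'' means precisely that $e_{\mathfrak{P}(L/K)} = 1$ for every prime ideal $\mathfrak{P}$ of $K$. Consequently each summand $\mathbb{Z}/e_{\mathfrak{P}(L/K)}\mathbb{Z}$ is the trivial group, and since the index in \eqref{equation, main exact sequence} runs over the finite primes $\mathfrak{P}$ of $K$, the entire direct sum $\bigoplus_{\mathfrak{P}} \mathbb{Z}/e_{\mathfrak{P}(L/K)}\mathbb{Z}$ vanishes.

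Feeding this into the (BRZ) sequence, the four-term exact sequence collapses to
\[
\{0\} \rightarrow \Ker(\epsilon_{L/K}) \rightarrow H^1(Gal(L/K), U_L) \rightarrow \{0\},
\]
whence exactness forces the map $\Ker(\epsilon_{L/K}) \to H^1(Gal(L/K),U_L)$ to be simultaneously injective and surjective, i.e. an isomorphism. As a byproduct, the vanishing of the final term of (BRZ) also shows that $\Po(L/K) = \epsilon_{L/K}(\Cl(K))$ whenever $L/K$ is unramified at the finite places.

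I do not expect any genuine obstacle here: the only point requiring a moment's care is that the direct sum in (BRZ) is indexed by the finite primes, so ramification at the archimedean places is irrelevant and the hypothesis as stated suffices to annihilate the middle term. The present statement is strictly more general than Propositions \ref{proposition, Tannaka result} and \ref{proposition, Iwasawa result for finite extension of Hilbert class field} in that it drops the requirement $H(K) \subseteq L$; correspondingly, it identifies $H^1(Gal(L/K),U_L)$ only with the abstract group $\Ker(\epsilon_{L/K})$ rather than with $\Cl(K)$, since without that containment one can no longer invoke the principal ideal theorem to evaluate the kernel.
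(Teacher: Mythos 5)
Your proof is correct and is exactly the argument the paper intends: the paper's own proof consists of the single line ``Immediately follows from the exact sequence (BRZ)'', and your write-up simply makes explicit that unramifiedness at the finite places kills the direct sum $\bigoplus_{\mathfrak{P}} \mathbb{Z}/e_{\mathfrak{P}(L/K)}\mathbb{Z}$, forcing the first arrow to be an isomorphism. No further comment is needed.
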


\begin{proof}
Immediately follows from the exact sequence \eqref{equation, main exact sequence}.
\end{proof}

\begin{remark}
Note that in our proofs, we have assumed a weaker assumption, i.e. \textit{finite places} are unramified 
versus \textit{all places}. 
For instance, for $L=\mathbb{Q}(\sqrt{-1},\sqrt{3})$ and $K=\mathbb{Q}(\sqrt{3})$, ramification happens only for infinite places of $K$ in $L$; whereas by  the exact sequence \eqref{equation, main exact sequence} $H^1(Gal(L/K),U_L)$ is trivial, since $L/K$ is unramified in all finite places (Note that $h_K=1$).
\end{remark}

As another application  of the exact sequence \eqref{equation, main exact sequence} we find the following results on the ``kernel'' and ``cokernel'' of the capitulation map in cyclic unramified extensions:

\begin{theorem} \label{theorem, generalization of Hilbert's theorem 94}
Let $L/K$ be a finite cyclic extension of number fields with Galois group $G$.
\begin{itemize}
	\item[(i)] If $L$ is unramified at all finite and infinite places of $K$, then 
	\begin{equation}  \label{equation, order of Ker(epsilon) for Hilbert's theorem 94}
	\#\Ker(\epsilon_{L/K})=\left(U_K: Norm_{L/K}(U_L) \right). [L:K].
	\end{equation}
	 
	\item[(ii)]  If  $L$ is unramified at all finite places of $K$, then
	\begin{equation} \label{equation, order of the kocernel of capitualtion map}
H^2(G,U_L) \hookrightarrow \Coker(\epsilon_{L/K}).
	\end{equation}
\end{itemize}

 \end{theorem}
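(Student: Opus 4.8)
The plan is to derive both parts from the exact sequence \eqref{equation, main exact sequence} together with the cohomology of the cyclic group $G$, for which Tate cohomology is $2$-periodic. In both parts the assumption that $L/K$ is unramified at all finite places forces every $e_{\mathfrak{P}(L/K)}=1$, so the middle term $\bigoplus_{\mathfrak{P}}\mathbb{Z}/e_{\mathfrak{P}(L/K)}\mathbb{Z}$ of \eqref{equation, main exact sequence} vanishes. Hence $\Ker(\epsilon_{L/K})\simeq H^1(G,U_L)$ and $\Po(L/K)=\epsilon_{L/K}(\Cl(K))$, exactly as in Proposition \ref{proposition, Iwasawa-Khare-Prasad result}. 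This is the common starting point.

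For part (i) I would compute $\#H^1(G,U_L)$ through the Herbrand quotient $q(U_L)=\#H^2(G,U_L)/\#H^1(G,U_L)$. Since $G$ is cyclic, $2$-periodicity gives $H^2(G,U_L)\simeq\hat{H}^0(G,U_L)=U_K/N_{L/K}(U_L)$, so that $\#H^2(G,U_L)=(U_K:N_{L/K}(U_L))$. The classical computation of the Herbrand quotient of the unit group of a cyclic extension gives $q(U_L)=\frac{1}{[L:K]}\prod_{v\mid\infty}[L_w:K_v]$; when $L/K$ is moreover unramified at every infinite place, each local degree $[L_w:K_v]$ equals $1$, whence $q(U_L)=1/[L:K]$. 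Combining these yields $\#\Ker(\epsilon_{L/K})=\#H^1(G,U_L)=[L:K]\cdot\#H^2(G,U_L)=(U_K:N_{L/K}(U_L))\cdot[L:K]$, which is \eqref{equation, order of Ker(epsilon) for Hilbert's theorem 94}. This refines Hilbert's Satz $94$, whose usual form only asserts $[L:K]\mid\#\Ker(\epsilon_{L/K})$.

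For part (ii) I would work with the two short exact sequences of $G$-modules $1\to U_L\to L^{\times}\to P(L)\to 1$ and $1\to P(L)\to I(L)\to\Cl(L)\to 1$. Hilbert $90$ (that is, $H^1(G,L^{\times})=0$) turns the first into an injection $\delta\colon H^1(G,P(L))\hookrightarrow H^2(G,U_L)$ whose cokernel embeds into $H^2(G,L^{\times})$. For the second sequence, the hypothesis that $L/K$ is unramified at the finite places makes $I(L)=\bigoplus_{\mathfrak{P}}\mathrm{Ind}_{G_{\mathfrak{P}}}^{G}\mathbb{Z}$ a sum of induced modules, so by Shapiro's lemma together with $H^1$ of a cyclic group acting trivially on $\mathbb{Z}$ one gets $\hat{H}^1(G,I(L))=0$. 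Chasing the resulting long exact sequence then identifies $H^1(G,P(L))$ with $\Cl(L)^{G}/\Po(L/K)$, the quotient of the ambiguous classes by the strongly ambiguous ones; and since $\Po(L/K)=\epsilon_{L/K}(\Cl(K))$ this group sits inside $\Cl(L)/\epsilon_{L/K}(\Cl(K))=\Coker(\epsilon_{L/K})$. Feeding $\delta$ into this chain is what produces the desired map $H^2(G,U_L)\to\Coker(\epsilon_{L/K})$.

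The step I expect to be the main obstacle is the injectivity of this last map, that is, getting \emph{all} of $H^2(G,U_L)$ and not merely the subgroup $\delta\bigl(H^1(G,P(L))\bigr)$ to land injectively in $\Coker(\epsilon_{L/K})$. This amounts to the surjectivity of $\delta$, equivalently to $U_K\subseteq N_{L/K}(L^{\times})$. By the Hasse norm theorem for the cyclic extension $L/K$ this is a purely local condition: at the finite places units are automatically norms because $L/K$ is unramified there, so the entire difficulty is concentrated at the archimedean places, where a unit of $K$ must be a local norm at each ramified real place. Controlling this archimedean contribution is the crux of the argument, and it is precisely the point at which the behaviour of $L/K$ at infinity enters; I would therefore isolate this local-norm computation before assembling the final embedding.
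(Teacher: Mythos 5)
Your part (i) is correct and is essentially the paper's own argument: the \eqref{equation, main exact sequence} sequence with all $e_{\mathfrak{P}(L/K)}=1$ gives $\#\Ker(\epsilon_{L/K})=\#H^1(G,U_L)$, and the Herbrand quotient of $U_L$, which equals $2^{s}/[L:K]$ with $s$ the number of ramified real places, collapses to $1/[L:K]$ under the archimedean hypothesis; combining with $\hat{H}^0(G,U_L)=U_K/N_{L/K}(U_L)$ yields \eqref{equation, order of Ker(epsilon) for Hilbert's theorem 94} exactly as in the paper.

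For part (ii) your skeleton also matches the paper's: the sequence $0\to P(L)\to I(L)\to \Cl(L)\to 0$ together with $H^1(G,I(L))=0$ identifies $H^1(G,P(L))$ with $\Cl(L)^G/\Po(L/K)$, and \eqref{equation, main exact sequence} gives $\Po(L/K)=\epsilon_{L/K}(\Cl(K))$, so that this quotient sits inside $\Coker(\epsilon_{L/K})$. (A minor point: $I(L)$ is a sum of induced modules and $H^1(G,I(L))=0$ for any Galois extension; no unramifiedness is needed there.) The genuine gap is the step you yourself flag and do not close: to conclude \eqref{equation, order of the kocernel of capitualtion map} you need the \emph{isomorphism} $H^1(G,P(L))\simeq H^2(G,U_L)$, not merely the Hilbert~90 injection $\delta\colon H^1(G,P(L))\hookrightarrow H^2(G,U_L)$. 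Without surjectivity of $\delta$ your chain of identifications embeds a subgroup of $\Coker(\epsilon_{L/K})$ into $H^2(G,U_L)$, which is the wrong direction for the stated containment. As you correctly compute, surjectivity of $\delta$ is equivalent to $U_K\subseteq N_{L/K}(L^{\times})$, and by the Hasse norm theorem this reduces to units of $K$ being local norms at the ramified real places; under the hypothesis of (ii), which allows ramification at infinity, this is not a formality (a unit that is negative at a real place becoming complex in $L$ is not a local norm there). This missing step is precisely the content of Kisilevsky's Lemma~1, which the paper invokes as a black box at this point; your proposal reduces the theorem to that lemma but does not prove it, so the argument as written is incomplete.
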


\begin{proof}
$(i)$ 
Since $L/K$ is unramified, by the exact sequence \eqref{equation, main exact sequence} we have

\begin{equation} \label{equation, order of H^1 equals to order of kernel(epsiilon)}
\#{H}^1(G,U_L)=\#Ker(\epsilon_{L/K}).
\end{equation}

On the one hand, since $L/K$ is cyclic, we can use the \textit{Herbrand quotient}:
\begin{align} \label{equation, Herbrand quotients of L over K}
Q(G,U_L)=\frac{\# \hat{H}^0(G, U_L)}{\#{H}^1(G,U_L)},
\end{align}
where 
\begin{align} \label{equation, order of H^0 hat}
\hat{H}^0(G, U_L)&=\frac{U_L^{G}}{Norm_{L/K}(U_L)}=\frac{U_K}{Norm_{L/K}(U_L)}.
\end{align}
On the other hand,
\begin{align*}
Q(G,U_L)= \frac{2^s}{[L:K]},
\end{align*}
where $s$ is the number of infinite places of $K$ ramified in $L$ \cite[Chapter IX]{Lang}. By the assumption $L/K$ is unramified which implies that $s=0$ i.e.
\begin{equation} \label{equation, Herbarand quotient for cyclic unrmaified extensions}
Q(G,U_L)=\frac{1}{[L:K]}.
\end{equation}
Using relations \eqref{equation, order of H^1 equals to order of kernel(epsiilon)}--\eqref{equation, Herbarand quotient for cyclic unrmaified extensions} we obtain the desired equality.

$(ii)$ Consider the following exact sequence
\begin{equation*}
	\{0 \} \longrightarrow P(L) \longrightarrow I(L) \longrightarrow \Cl(L) \longrightarrow \{0 \}.
\end{equation*}

Taking Cohomology, we get the  sequence 
\begin{equation*}
	\{0 \} \longrightarrow P(L)^G \longrightarrow I(L)^G \longrightarrow \left(\Cl(L)\right)^G \longrightarrow H^1(G,P(L)) \longrightarrow   \{0 \}
\end{equation*}
	is exact, since $H^1(G,I(L))=\{0\}$, see e.g.  \cite[proof of Theorem 1]{Kisilevsky} or \cite[Lemma 2.1]{Van der Waal}.
	Since $L/K$ is Galois, $\Po(L/K)=\frac{I(L)^G}{P(L)^G}$, see \cite[$\S$ 2]{MR2}. Hence the last exact sequence can be rewritten as
	\begin{equation*}
		\{0 \} \longrightarrow \Po(L/K) \longrightarrow  \left(\Cl(L)\right)^G \longrightarrow H^1(G,P(L)) \longrightarrow   \{0 \},
	\end{equation*}
	which implies that
	\begin{equation} \label{equation, G-invariant ideal class group modulo relative Polya}
\frac{\left(\Cl(L)\right)^G }{\Po(L/K)} \simeq H^1(G,P(L)). 
	\end{equation}
On the other hand, Kisilevsky \cite[Lemma 1]{Kisilevsky} proved that
\begin{equation} \label{equation, Kisilevsky result}
 H^1(G,P(L)) \simeq H^2(G,U_L).
\end{equation}	
Therefore
 \begin{equation} \label{equation, G-invariant ideal class group modulo relative Polya isomorphic to H^2}
\frac{\left(\Cl(L)\right)^G }{\Po(L/K)} \simeq  H^2(G,U_L).
 \end{equation}	
Since all finite places of $K$ are unramified in $L$, the exact sequence \eqref{equation, main exact sequence} yields
\begin{equation} \label{equation, relative Polya is image epsilon in unramified case}
\Po(L/K)=\epsilon_{L/K}\left(\Cl(K)\right).
\end{equation}
Using equations \eqref{equation, G-invariant ideal class group modulo relative Polya isomorphic to H^2} and \eqref{equation, relative Polya is image epsilon in unramified case} we have
\begin{equation}
 H^2(G,U_L)\simeq \frac{\left(\Cl(L)\right)^G }{\epsilon_{L/K}\left(\Cl(K)\right)}
\end{equation}
which is a subgroup of $\Coker(\epsilon_{L/K})$.
\end{proof}

\begin{remark}
Part $(i)$ of Theorem \ref{theorem, generalization of Hilbert's theorem 94} can be  thought of as a \textit{quantitative version} of ``Hilbert's Theorem 94''  \cite{Hilbert's book}.
\end{remark}

\section{Ostrowski quotient} \label{section, Ostrowski quotient}

By exact sequence \eqref{equation, Zantema's exact sequence}, there exists a surjective map from $ \bigoplus_{p} \dfrac{\mathbb{Z}}{e_{p(L/\mathbb{Q})}\mathbb{Z}} $ onto $\Po(L)$. Whereas switching to the relative case, namely $\Po(L/K)$ for $L/K$ a finite Galois extension of number fields, this statement \textit{does not} hold, in general. There are finite Galois extensions $L/K$ such that the only map from  $ \bigoplus_{\mathfrak{p}} \dfrac{\mathbb{Z}}{e_{\mathfrak{p}(L/K)}\mathbb{Z}} $ to $ \Po(L/K) $ is the zero map while $\Po(L/K)$ is non trivial:

\begin{example} \label{example, relative Polya group for cyclotomic fields}
	Let $ K = \mathbb{Q}(\zeta_p) $ and $ L = \mathbb{Q}(\zeta_{p^n}) $ where $ n \geq 1 $ and $ p $ is a regular prime, i.e. $ p \nmid h_K $, and $\zeta_p$ (resp. $\zeta_{p^n}$) denote the $p$-th (resp. $p^n$-th) primitive root of unity. By a  result of Iwasawa \cite{IwasawaII}, since $h_K$ is not divisible by $p$, so is not $h_L$. In particular $ p \nmid \#\Po(L/K)$. The only prime of $K$ that ramifies in $L$  is $ \mathfrak{p} =< 1-\zeta_p> $  whose ramification index is 
	$[L : K] = p^{n-1} $, i.e. $ \mathfrak{p}$ is totally ramified. Therefore the only map from  $ \bigoplus_{\mathfrak{p}} \dfrac{\mathbb{Z}}{e_{\mathfrak{p}(L/K)}\mathbb{Z}} $ to $ \Po(L/K) $ is the zero map.  Masley and Montgomery \cite{Masley} proved that $h_K \neq 1$ for $p > 19$. Since $gcd(h_K,[L:K])=1$, one has $\Cl(K) \hookrightarrow \Po(L/K)$, see \cite[Corollary 2.4]{MR2}.  
	 Hence for $p > 19$, $\Po(L/K)$ is nontrivial.
	 Note that 
	 \begin{equation*} 
\mathfrak{p} \mathcal{O}_L=\left(1-\zeta_p\right)\mathcal{O}_L=\left<1-\zeta_{p^n}\right>^{p^{n-1}}
	 \end{equation*}
	  is a principal ideal of $L$, hence is trivial in $\Po(L/K)$.
\end{example}
The above example shows that the controllability of the P\'olya group by ramification, cannot transfer to the relative P\'olya group. This motivates us to  modify the notion of relative P\'olya group to arrive at a notion directly governed by ramification.

\begin{definition} \cite{Ehsan Thesis} \label{definition, Ostrowski group}
	For a finite extension $ L/K $ of number fields, the \textit{Ostrowski quotient} $\Ost(L/K)$   is defined as  
	\begin{equation} \label{equation, Ostrowski group general case}
	\Ost(L/K) := \dfrac{\Po(L/K)}{\Po(L/K) \cap \epsilon_{L/K}(Cl(K))}.
	\end{equation} 
	In particular, $\Ost(L/\mathbb{Q})=\Po(L/\mathbb{Q})=\Po(L)$ and $\Ost(L/L)=\{0\}$. The extension $L/K$ is called ``\textit{Ostrowski}''  (or $L$ is called $K$-Ostrowski) if $\Ost(L/K)$ is trivial.
\end{definition}

\begin{remark} \label{remark, if L/K is Galois then epsilon of Cl(K) is contained in relative Polya group}
	If $L/K$ is a Galois extension with Galois group $G$, then  $\epsilon_{L/K}(\Cl(K)) \subseteq \Po(L/K)$ \cite[$\S$ 2]{MR2}, and the exact sequence \eqref{equation, main exact sequence} can be rewritten as follows
\begin{equation} \label{equation, main exact sequence for the Ostrowski quotient}
{\small
\xymatrix{
\{0\} \ar[r]  &  \Ker({\epsilon}_{L/K})  \ar[r]^{\theta_{L/K}}   & H^1(G,U_L) \ar[r] & \bigoplus_{\mathfrak{P}} \frac{\mathbb{Z}}{e_{\mathfrak{P}(L/K)}\mathbb{Z}} \ar[r] & \Ost(L/K)  \ar[r] & \{0\}.
 }}
\end{equation}
 Hence in this case we have
	\begin{equation} \label{equation, order of the Ostrowski quotient}
	\#\Ost(L/K)=\frac{\#\Ker(\epsilon_{L/K}). \prod_{\mathfrak{P} | disc(L/K)} e_{\mathfrak{P}(L/K)}}{\#H^1(G,U_L)},
	\end{equation}
	which relates the Ostrowski quotient of $L/K$ to the ``\textit{capitulation kernel}''.

\end{remark}

Using Corollary \ref{corollary, some consequence of the main exact sequence for RPG}, we immediately find:

\begin{theorem}
	Let $L/K$ be a finite Galois extension of number fields with Galois group $G$.
	\begin{itemize}
		\item[(i)] If $gcd(h_K,[L:K])=1$, then the following sequence is exact:
		\begin{equation*} 
			\{0 \}  \rightarrow H^1(G,U_L) \rightarrow \bigoplus _{\mathfrak{P}} \frac{\mathbb{Z}}{e_{\mathfrak{P} (L/K)} \mathbb{Z}} \rightarrow \Ost(L/K) \rightarrow \{0 \}.
		\end{equation*}
		\item[(ii)] If either $gcd(h_L,[L:K])=1$ or all finite places of $K$ are unramified in $L$, then $L/K$ is Ostrowski. In particular, the extensions $H(K)/K$ and $\Gamma(K)/K$ are Ostrowski, where $H(K)$ and $\Gamma(K)$ denote the Hilbert class field and genus field of $K$, respectively.
	\end{itemize}
\end{theorem}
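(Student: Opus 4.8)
The plan is to exploit the fact that, since $L/K$ is Galois, the reformulated sequence \eqref{equation, main exact sequence for the Ostrowski quotient} already identifies $\Ost(L/K)$ with the cokernel term appearing in the (BRZ) sequence: indeed $\epsilon_{L/K}(\Cl(K)) \subseteq \Po(L/K)$, so the intersection in Definition \ref{definition, Ostrowski group} collapses and $\Ost(L/K) = \Po(L/K)/\epsilon_{L/K}(\Cl(K))$. With this identification in hand, both assertions become direct specializations of Corollary \ref{corollary, some consequence of the main exact sequence for RPG}, which is why they follow ``immediately''.

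For part (i) I would first observe that when $\gcd(h_K,[L:K])=1$ the composite $\mathcal{N}_{L/K}\circ\epsilon_{L/K}$ is the $[L:K]$-th power map on $\Cl(K)$, which is an automorphism; hence $\epsilon_{L/K}$ is injective and $\Ker(\epsilon_{L/K})=\{0\}$. Inserting this into \eqref{equation, main exact sequence for the Ostrowski quotient} truncates the four-term sequence to the displayed three-term one. Equivalently, part (i) is a direct transcription of the exact sequence in Corollary \ref{corollary, some consequence of the main exact sequence for RPG}(iii), once one renames its cokernel $\Po(L/K)/\Cl(K)$ as $\Ost(L/K)$ and notes that summands attached to unramified primes are trivial, so the sum over $\mathfrak{P}\mid\disc(L/K)$ agrees with the sum over all $\mathfrak{P}$.

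For part (ii) I would treat the two hypotheses separately. If $\gcd(h_L,[L:K])=1$, then Corollary \ref{corollary, some consequence of the main exact sequence for RPG}(ii) gives $\Po(L/K)=\epsilon_{L/K}(\Cl(K))$, whence $\Ost(L/K)=\{0\}$ straight from the definition. If instead every finite place of $K$ is unramified in $L$, then $e_{\mathfrak{P}(L/K)}=1$ for all $\mathfrak{P}$, so the middle term $\bigoplus_{\mathfrak{P}}\mathbb{Z}/e_{\mathfrak{P}(L/K)}\mathbb{Z}$ of \eqref{equation, main exact sequence for the Ostrowski quotient} vanishes, and surjectivity onto $\Ost(L/K)$ forces it to be trivial. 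The ``in particular'' statement then follows because $H(K)/K$ is by construction unramified at every finite place, while $\Gamma(K)\subseteq H(K)$ is likewise unramified over $K$ at all finite places and remains abelian (hence Galois) over $K$, so the second hypothesis of (ii) applies to both.

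I do not anticipate a genuine obstacle, as each step is a direct specialization of the (BRZ) sequence or of Corollary \ref{corollary, some consequence of the main exact sequence for RPG}. The only point demanding a moment's care is the bookkeeping in part (i): confirming that the injectivity of $\epsilon_{L/K}$ annihilates the kernel term and that the two index sets for the direct sum coincide up to trivial summands, so that the four-term sequence genuinely reduces to the stated three-term exact sequence.
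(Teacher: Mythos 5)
Your proposal is correct and follows exactly the route the paper intends: the paper gives no written proof beyond the phrase ``Using Corollary \ref{corollary, some consequence of the main exact sequence for RPG}, we immediately find,'' and your argument fills in precisely those specializations --- injectivity of $\epsilon_{L/K}$ from $\gcd(h_K,[L:K])=1$ truncating the sequence \eqref{equation, main exact sequence for the Ostrowski quotient} for part (i), and Corollary \ref{corollary, some consequence of the main exact sequence for RPG}(ii) respectively the vanishing of the ramification term for part (ii). The bookkeeping points you flag (identifying $\Ost(L/K)$ with $\Po(L/K)/\epsilon_{L/K}(\Cl(K))$ in the Galois case, and discarding trivial summands at unramified primes) are handled correctly.
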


For ``abelian'' number fields, Zantema proved:

\begin{proposition} \cite[Proposition 2.5]{Zantema} \label{proposition, Zantema result about abelian number fields}
If $K/\mathbb{Q}$ is an abelian extension ramifying at only one prime, then $K$ is a P\'olya field.
\end{proposition}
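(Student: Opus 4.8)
The plan is to show directly that $\Po(K)=\{0\}$ by exhibiting a generator for the single Ostrowski ideal that controls the P\'olya group. First I would feed the hypothesis into Zantema's exact sequence \eqref{equation, Zantema's exact sequence}: since $p$ is the only ramified prime, every summand $\mathbb{Z}/e_q\mathbb{Z}$ with $q\neq p$ is trivial, so $\Po(K)$ is a cyclic quotient of $\mathbb{Z}/e_p\mathbb{Z}$, generated by the class of the single Ostrowski ideal $\mathfrak{p}_0:=\prod_{\mathfrak{P}\mid p}\mathfrak{P}$. (Every unramified prime $q$ contributes a \emph{principal} Ostrowski ideal, because in the Galois extension $K/\mathbb{Q}$ the primes above $q$ share a common residue degree, whence the norm-$q^f$ product equals $(q)\mathcal{O}_K=\prod_{\mathfrak{P}\mid q}\mathfrak{P}$.) Thus it suffices to prove that $\mathfrak{p}_0$ is principal.

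Next I would place $K$ inside a cyclotomic field. By the Kronecker--Weber theorem $K\subseteq\mathbb{Q}(\zeta_n)$ for some $n$, and since $p$ is the only finite prime ramifying in $K/\mathbb{Q}$, the conductor of $K$ is a power of $p$; hence one may take $K\subseteq\mathbb{Q}(\zeta_{p^k})$ for a suitable $k$. In $\mathbb{Q}(\zeta_{p^k})$ the prime $p$ is totally ramified, with ramification index $\phi(p^k)=[\mathbb{Q}(\zeta_{p^k}):\mathbb{Q}]$. Applying multiplicativity of ramification indices in the tower $\mathbb{Q}\subseteq K\subseteq\mathbb{Q}(\zeta_{p^k})$ forces $e_{p(K/\mathbb{Q})}=[K:\mathbb{Q}]$, i.e. $p$ is totally ramified in $K$ as well. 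In particular $\mathfrak{p}_0$ is the \emph{unique} prime of $K$ above $p$, and both residue fields $\mathcal{O}_K/\mathfrak{p}_0$ and $\mathcal{O}_{\mathbb{Q}(\zeta_{p^k})}/(1-\zeta_{p^k})$ equal $\mathbb{F}_p$.

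Finally I would descend the principal prime $\lambda:=(1-\zeta_{p^k})$ of $\mathbb{Q}(\zeta_{p^k})$ to $K$ via the ideal norm. Since $\lambda$ is the unique prime above $\mathfrak{p}_0$ and the residue degree $f(\lambda/\mathfrak{p}_0)=1$ (both residue fields being $\mathbb{F}_p$), the norm formula gives $N_{\mathbb{Q}(\zeta_{p^k})/K}(\lambda)=\mathfrak{p}_0$. On the other hand $\lambda=(1-\zeta_{p^k})$ is principal, so $N_{\mathbb{Q}(\zeta_{p^k})/K}(\lambda)$ is the principal ideal generated by $N_{\mathbb{Q}(\zeta_{p^k})/K}(1-\zeta_{p^k})\in\mathcal{O}_K$. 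Therefore $\mathfrak{p}_0$ is principal, the generating class of $\Po(K)$ is trivial, and $K$ is a P\'olya field.

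I expect the only delicate steps to be the reduction of $\Po(K)$ to the single class $[\mathfrak{p}_0]$ and the verification that $p$ is totally ramified in $K$; once these are in place the conclusion is a clean norm computation. An alternative, purely cohomological route would instead compute $\#\Po(K)=e_p/\#H^1(\Gal(K/\mathbb{Q}),U_K)$ directly from Corollary \ref{corollary, some consequence of the main exact sequence for RPG}(i) (using $h_{\mathbb{Q}}=1$), but this would require an independent determination of $\#H^1(\Gal(K/\mathbb{Q}),U_K)$ and seems less transparent than the cyclotomic descent sketched above.
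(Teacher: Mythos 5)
Your proof is correct and complete: the reduction of $\Po(K)$ to the single class $[\mathfrak{p}_0]$, the containment $K\subseteq\mathbb{Q}(\zeta_{p^k})$ via Kronecker--Weber and the conductor, the total ramification of $p$ by multiplicativity of ramification indices, and the descent $N_{\mathbb{Q}(\zeta_{p^k})/K}\bigl((1-\zeta_{p^k})\bigr)=\mathfrak{p}_0$ are all valid. The paper states this proposition only as a citation to Zantema without reproducing an argument, and what you give is precisely the standard proof of that result, so there is nothing further to reconcile.
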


 Using some notions in class field theory for finite abelian extensions, namely the ``conductor'' and the ``ray class field'' \cite[Chapter X, $\S$3]{Lang}, we can find an analogous  statement to Proposition \ref{proposition, Zantema result about abelian number fields} for the Ostrowski quotient:

\begin{theorem} \label{theorem, generalizing Zantema result for abelian number fields ramified only at one prime}
Let $K/F$ be a finite abelian extension of number fields such that only one prime of $F$ is ramified in $K$. Let $L$ be the ray class field of $F$ for the modulus $\mathfrak{c}(K/F)$, where $\mathfrak{c}(K/F)$ denotes the conductor of $K$ over $F$. If $L/F$ is Ostrowski, then so is
 $K/F$.
\end{theorem}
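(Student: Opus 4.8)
The plan is to realize $K$ inside $L$, reduce the hypothesis to the principality of a single ideal, and then transport that principality down to $K$ by a norm computation generalizing Zantema's use of $1-\zeta$. First I would record the field-theoretic set-up: since $\mathfrak{c}(K/F)$ is the conductor of $K/F$, the conductor--ray-class-field correspondence gives $K\subseteq L$, and since the Hilbert class field $H(F)$ is the ray class field of the trivial modulus, $H(F)\subseteq L$ as well. Both $K/F$ and $L/F$ are abelian and ramified at finite primes only at $\mathfrak p$, so in the sequence \eqref{equation, main exact sequence for the Ostrowski quotient} the term $\bigoplus_{\mathfrak P}\mathbb Z/e_{\mathfrak P}\mathbb Z$ collapses to a single cyclic summand; hence $\Ost(K/F)$ and $\Ost(L/F)$ are cyclic, generated respectively by the classes of $\Pi_K:=\prod_{\mathfrak P\mid\mathfrak p}\mathfrak P$ (in $K$) and $\Pi_L:=\prod_{\mathfrak Q\mid\mathfrak p}\mathfrak Q$ (in $L$). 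Because $H(F)\subseteq L$, the principal ideal theorem yields $\epsilon_{L/F}=\epsilon_{L/H(F)}\circ\epsilon_{H(F)/F}=0$, so $\Ost(L/F)=\Po(L/F)$ and the hypothesis ``$L/F$ is Ostrowski'' becomes simply: $\Pi_L$ is principal in $L$.

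Next I would isolate the unramified direction. Let $L_0$ be the inertia field of $\mathfrak p$ in $L/F$. As $\mathfrak p$ is the only ramified finite prime, $L_0/F$ is unramified, so $L_0\subseteq H(F)$; combined with $H(F)\subseteq L$ this forces $L_0=H(F)$. Put $K':=KH(F)$. Then $\Gal(L/K')=\Gal(L/K)\cap\Gal(L/H(F))$ lies inside the inertia subgroup $\Gal(L/H(F))$, so $L/K'$ is totally ramified at $\mathfrak p$ (decomposition number and residue degree $1$), while $K'/K$ is unramified. Writing $\Pi_L=(\gamma)$, total ramification with residue degree $1$ gives $N_{L/K'}(\Pi_L)=\Pi_{K'}$, whence $\Pi_{K'}=(N_{L/K'}\gamma)$ is principal --- exactly the analogue of Zantema's computation $\mathfrak P=(N_{\mathbb Q(\zeta)/K}(1-\zeta))$. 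Since only $\mathfrak p$ ramifies and $\epsilon_{K'/F}=0$, this shows $\Po(K'/F)=\langle[\Pi_{K'}]\rangle=0$, i.e. $K'/F$ is Ostrowski. By Lemma \ref{lemma, for P sub N sub M, if M/N is Galois are Galois, then Po(M/P) is contained in Po(M/N)}$(i)$, $\epsilon_{K'/K}(\Po(K/F))\subseteq\Po(K'/F)=0$, so every class in $\Po(K/F)$, in particular $[\Pi_K]$, capitulates in $K'$.

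The hard part will be the final descent from $K'$ to $K$: one must upgrade ``$[\Pi_K]$ capitulates in $K'=KH(F)$'' to ``$[\Pi_K]\in\epsilon_{K/F}(\Cl(F))$'', that is, $\Ost(K/F)=0$. The naive norm $N_{K'/K}$ recovers only $[\Pi_K]^{[K':K]}\in\epsilon_{K/F}(\Cl(F))$, discarding precisely the class-group direction $\Delta:=\Gal(K'/K)$, a quotient of $\Cl(F)$; and a unit obstruction prevents capitulation in $K'$ alone from forcing membership in $\epsilon_{K/F}(\Cl(F))$ in general. Since $K'/K$ is unramified, Proposition \ref{proposition, Iwasawa-Khare-Prasad result} identifies $\Ker(\epsilon_{K'/K})\cong H^1(\Delta,U_{K'})$, and the functorial relation $\res\circ\theta_{K'/F}=\theta_{K'/K}\circ\epsilon_{K/F}$ shows that $\epsilon_{K/F}(\Cl(F))$ corresponds under $\theta_{K'/K}$ to $\res\bigl(\theta_{K'/F}(\Cl(F))\bigr)$; the remaining task is to prove that the capitulation class of the strongly ambiguous ideal $\Pi_K$ lands in this subgroup, which is where the canonicity of $\gamma$ as a generator coming from the ray class field must be used. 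I note that when $H(F)\subseteq K$ (in particular for $F=\mathbb Q$) one has $K'=K$ and the descent is vacuous, recovering Proposition \ref{proposition, Zantema result about abelian number fields}; thus the genuine difficulty, and the place where the hypothesis on $L$ is used in full, is exactly this unramified descent.
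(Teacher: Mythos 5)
Your proposal is not a complete proof, and you say so yourself: the step you leave open is precisely the one carrying the content of the theorem. Your intermediate result is fine as far as it goes --- for $K'=KH(F)$ the norm $N_{L/K'}$ of a generator of $\Pi_L$ generates $\Pi_{K'}$, so $K'/F$ is Ostrowski (modulo one caveat: the inertia field of $\mathfrak p$ in $L/F$ equals $H(F)$ only when the conductor $\mathfrak c(K/F)$ has no infinite part; otherwise it can be a strictly larger subfield of the narrow Hilbert class field and $L/K'$ need not be totally ramified at $\mathfrak p$). But the descent from ``$[\Pi_K]$ capitulates in $K'$'' to ``$[\Pi_K]\in\epsilon_{K/F}(\Cl(F))$'' is never carried out: as you note, the norm from $K'$ only controls $[\Pi_K]^{[K':K]}$, and the cohomological reformulation via $H^1(\Gal(K'/K),U_{K'})$ is left as a ``remaining task''. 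So what you have actually proved is that $KH(F)/F$ is Ostrowski, not that $K/F$ is.

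The paper does not route through $K'$ at all. It keeps the hypothesis in the form $[\Pi_{\mathfrak p^{f_L}}(L/F)]=\epsilon_{L/F}([\mathfrak a])$ for some $[\mathfrak a]\in\Cl(F)$ (no appeal to the principal ideal theorem, hence no need to observe $H(F)\subseteq L$), applies $\mathcal N_{L/K}$ directly, and compares the two expressions for $\mathcal N_{L/K}(\mathfrak p\mathcal O_L)$: on one side $(\mathfrak p\mathcal O_K)^{[L:K]}=\Pi_{\mathfrak p^{f_K}}(K/F)^{[L:K]\,e_{\mathfrak p(K/F)}}$, on the other $\mathcal N_{L/K}\bigl(\Pi_{\mathfrak p^{f_L}}(L/F)\bigr)^{e_{\mathfrak p(L/F)}}$ together with $\mathcal N_{L/K}\circ\epsilon_{L/K}=(\cdot)^{[L:K]}$, yielding $[\Pi_{\mathfrak p^{f_K}}(K/F)]=\epsilon_{K/F}([\mathfrak a])^{e_{\mathfrak p(L/F)}/e_{\mathfrak p(K/F)}}$; the Ostrowski ideals above the unramified primes of $F$ are then handled by part (i) of Lemma \ref{lemma, basic properties of pre-Ostrowski group}, since $K/F$ is Galois. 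Note that the obstacle you isolated does not disappear in this route either: at the level of ideal classes the comparison gives an equality of $([L:K]e_{\mathfrak p(K/F)})$-th powers, and dividing out the common exponent is exactly the delicate point. If you want to finish your own argument, the productive move is your $N_{L/K'}$ computation pushed all the way down: work with the ideal identity $N_{L/K}(\Pi_L)=\Pi_K^{[L:K]/e}$ (with $e=e_{\mathfrak p(L/F)}/e_{\mathfrak p(K/F)}$) rather than stopping at $K'$, and confront the residual $[L:K]/e$-torsion ambiguity head on instead of deferring it to an unramified capitulation problem.
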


\begin{proof}
Let $\mathfrak{p}$ be the only prime of $F$ ramified in $K$ and  
\begin{align} \label{equation, decomposition form of p in K and L for abelian extensions}
\mathfrak{p} \mathcal{O}_K&=(\mathfrak{P}_1 \mathfrak{P}_2 \dots \mathfrak{P}_g)^{e_{\mathfrak{p}(K/F)}} =\left( \Pi_{\mathfrak{p}^{f_K}}(K/F)\right)^{e_{\mathfrak{p}(K/F)}} , \nonumber \\  
\mathfrak{p} \mathcal{O}_L&=  (\mathfrak{C}_1 \mathfrak{C}_2 \dots \mathfrak{C}_t)^{e_{\mathfrak{p}(L/F)}} =\left( \Pi_{\mathfrak{p}^{f_L}}(L/F)\right)^{e_{\mathfrak{p}(L/F)}}.
\end{align}
Since $L/F$ is Ostrowski,  $\Pi_{\mathfrak{p}^{f_L}}(L/F) \in \epsilon_{L/F}(\Cl(F))$. Equivalently $ \Pi_{\mathfrak{p}^{f_L}}(L/F)=\epsilon_{L/F}([\mathfrak{a}])$ for some $[\mathfrak{a}] \in \Cl(F)$.
Hence
	{\small
	\begin{equation} \label{equation, 1}
\mathcal{N}_{L/K}\left(\Pi_{\mathfrak{p}^{f_L}}(L/F)\right)=\mathcal{N}_{L/K}\left(\epsilon_{L/F}([\mathfrak{a}]) \right)=\mathcal{N}_{L/K}\left( \epsilon_{L/K} (\epsilon_{K/F}([\mathfrak{a}]))\right)=\left(\epsilon_{K/F}([\mathfrak{a}]) \right)^{[L:K]}.
	\end{equation}}
On the other hand,
\begin{equation} \label{equation, 2}
\mathcal{N}_{L/K}\left(\mathfrak{p} \mathcal{O}_L \right) =\left(\mathfrak{p} \mathcal{O}_K \right)^{[L:K]}=\left( \Pi_{\mathfrak{p}^{f_K}}(K/F)\right)^{[L:K].e_{\mathfrak{p}(K/F)}}.
\end{equation}	
 By relations \eqref{equation, decomposition form of p in K and L for abelian extensions}, \eqref{equation, 1} and \eqref{equation, 2} we get
 	\begin{equation}
 \Pi_{\mathfrak{p}^{f_K}}(K/F) =\left(\epsilon_{K/F}([\mathfrak{a}]) \right)^{\frac{e_{\mathfrak{p}(L/F)}}{e_{\mathfrak{p}(K/F)}}} \in \epsilon_{K/F}(\Cl(F)).
 	\end{equation}
Finally, since $K/F$ is a Galois extension, for each prime $\mathfrak{p}^{\prime}\neq \mathfrak{p}$	of $F$ we have $\Pi_{{\mathfrak{p}^{\prime}}^{f_K}}(K/F) \in \epsilon_{K/F}(\Cl(F))$, see part $(i)$ of Lemma \ref{lemma, basic properties of pre-Ostrowski group} below. Therefore $\Ost(K/F)=\{0\}$ as claimed.
	\end{proof}

\begin{remark}
	For $F=\mathbb{Q}$, triviality of $\Ost(L/F)$ is a classical result for \textit{cyclotomic} fields \cite[Proposition 2.6]{Zantema}.
\end{remark}

Leriche  proved that the Hilbert class field $H(K)$ of $K$ is a P\'olya field \cite[Corollary 3.2]{Leriche 2014}, which recently has been generalized to the triviality of $\Po(H(K)/K)$, see \cite[Corollary 2.9]{MR2}.   
Using the same method as in \cite[$\S$2]{MR2} we have:
\begin{theorem} \label{theorem, Ostrowski quotient for Hilbert class field is trivial}
	For a finite extension $L/K$  of number fields, the extension $H(L)/K$ is Ostrowski.
\end{theorem}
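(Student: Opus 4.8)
The plan is to exploit the tower $K \subseteq L \subseteq H(L)$ and to transfer the computation from the (possibly non-Galois) extension $H(L)/K$ to the extension $H(L)/L$, which is always Galois and over which the relative P\'olya group is already understood. First I would record the two structural facts about $H(L)/L$: since $H(L)$ is the Hilbert class field of $L$, the extension $H(L)/L$ is abelian, in particular Galois, and unramified at all finite places of $L$; and by the principal ideal theorem every ideal class of $L$ capitulates in $H(L)$, i.e. $\epsilon_{H(L)/L}(\Cl(L)) = \{0\}$, so every ideal class of $L$ extended to $H(L)$ is principal. These are exactly the hypotheses of part $(iv)$ of Corollary~\ref{corollary, some consequence of the main exact sequence for RPG} applied with $L$ as the base field, and it yields
\[
\Po(H(L)/L) = \{0\}.
\]

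Next, since $H(L)/L$ is Galois, I would apply part $(ii)$ of Lemma~\ref{lemma, for P sub N sub M, if M/N is Galois are Galois, then Po(M/P) is contained in Po(M/N)} to the tower $K \subseteq L \subseteq H(L)$, which gives the inclusion $\Po(H(L)/K) \subseteq \Po(H(L)/L)$. Combined with the previous step this forces $\Po(H(L)/K) = \{0\}$, whence a fortiori $\Po(H(L)/K) \cap \epsilon_{H(L)/K}(\Cl(K)) = \{0\}$ and $\Ost(H(L)/K) = \{0\}$, so $H(L)/K$ is Ostrowski. In fact the argument proves the stronger assertion that the entire relative P\'olya group $\Po(H(L)/K)$ is trivial, the Ostrowski conclusion being merely the quotient of this.

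The point requiring the most care is precisely the reason a direct attack fails: the extension $H(L)/K$ need not be Galois, because $L/K$ is an arbitrary finite extension, so the exact sequence \eqref{equation, main exact sequence} is simply not available for $H(L)/K$ itself. The whole strategy is to circumvent this by routing through the intermediate field $L$, over which $H(L)$ is automatically Galois, and to invoke the comparison Lemma~\ref{lemma, for P sub N sub M, if M/N is Galois are Galois, then Po(M/P) is contained in Po(M/N)}$(ii)$, whose only hypothesis is that the top extension $H(L)/L$ be Galois. I would also verify that the tower hypotheses of that Lemma are met in the intended form (base $K$, middle $L$, top $H(L)$), and note that the standard convention making $H(L)/L$ unramified only at the finite places does not affect the vanishing of $\Po(H(L)/L)$, since Corollary~\ref{corollary, some consequence of the main exact sequence for RPG}$(iv)$ already requires unramifiedness only at the finite places.
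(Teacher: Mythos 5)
Your proposal is correct and follows essentially the same route as the paper: both reduce to the tower $K \subseteq L \subseteq H(L)$, use Lemma~\ref{lemma, for P sub N sub M, if M/N is Galois are Galois, then Po(M/P) is contained in Po(M/N)}$(ii)$ to get $\Po(H(L)/K) \subseteq \Po(H(L)/L)$, and kill the latter via the principal ideal theorem together with Corollary~\ref{corollary, some consequence of the main exact sequence for RPG}$(iv)$. The only difference is cosmetic: the paper additionally computes $\epsilon_{H(L)/K}(\Cl(K)) = \{0\}$ to identify $\Ost(H(L)/K)$ with $\Po(H(L)/K)$, a step you correctly observe is unnecessary once the whole relative P\'olya group is shown to vanish.
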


\begin{proof}
Using the principal ideal theorem, we have
\begin{equation*}
\epsilon_{H(L)/K}(\Cl(K)) =\epsilon_{H(L)/L} \left(\epsilon_{L/K}(\Cl(K)) \right) \subseteq  \epsilon_{H(L)/L} \left(\Cl(L) \right) =\{0\}.
\end{equation*}
Therefore 
\begin{equation*}
\Ost(H(L)/K) =\Po(H(L)/K) \subseteq \Po(H(L)/L) =\{0\}.
\end{equation*}
Indeed, the first equality follows from Definition \ref{definition, Ostrowski group}, the middle containment follows from part $(ii)$ of Lemma \ref{lemma, for P sub N sub M, if M/N is Galois are Galois, then Po(M/P) is contained in Po(M/N)} and the last equality follows from part $(iii)$ of Corollary \ref{corollary, some consequence of the main exact sequence for RPG}.

\end{proof}

\begin{remark}
There is another proof for Theorem \ref{theorem, Ostrowski quotient for Hilbert class field is trivial}: One can relativize  Leriche's method in \cite[proof of Proposition 3.1]{Leriche 2014} to show that all \textit{relative Ostrowski ideals} $\Pi_{\mathfrak{p}^f}(H(L)/K)$ belong to $\epsilon_{H(L)/K} \left(\Cl(K)\right)$. Further, in the case that $L/K$ is a Galois extension, using equality \eqref{equation, order of the Ostrowski quotient} one can see that
 the result of Brumer-Rosen in \cite[Proposition 2.4]{Brumer-Rosen} coincides with Theorem \ref{theorem, Ostrowski quotient for Hilbert class field is trivial}.	
\end{remark}

\begin{definition} \cite[Definition 2.3]{Leriche 2014} \label{definition, Polya extension}
An extension $L/K$ is said to be a \textit{P\'olya extension} if $\epsilon_{L/K}(\Po(K))=\{0\}$.
\end{definition}

\begin{remark}
There is another notion of P\'olya extension due to Spickermann \cite{Spickermann}: a finite Galois extension $L/K$ of number fields with Galois group $G$ is said to be a \textit{P\'olya extension}, in the sense of Spickermann, if $I(L)^G \subseteq I(K).P(L)$. Indeed, this definition is equivalent to $\Po(L/K)=\epsilon_{L/K}(\Cl(K))$, see \cite[page 11]{ChabertI}. In other words, for Galois extensions the notion of P\'olya extension in Spickermann's sense coincides with $\Ost(L/K)=\{0\}$.
\end{remark}

\noindent \textbf{Convention.} Throughout this article, by a \textit{P\'olya extension} we mean the same notion as in Definition \ref{definition, Polya extension}, and not in the sense of Spickermann.

The following  relation between P\'olya fields and P\'olya extensions has been found by Leriche:

\begin{proposition} \cite[Proposition 3.4]{Leriche 2014} \label{proposition, Leriche result on relation between Polya fields and Polya extensions}
Let $L/K$ be a P\'olya extension of Galois number fields. If all finite places are unramified in the estension $L/K$, then $L$ is a P\'olya field.
\end{proposition}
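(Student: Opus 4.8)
The plan is to reduce the P\'olyaness of $L$, i.e. the triviality of $\Po(L)=\Po(L/\mathbb{Q})$, to the P\'olya-extension hypothesis by a direct comparison of the generators of $\Po(L)$ and $\Po(K)$ through the transfer $\epsilon_{L/K}$. The central identity I would establish is
\[
\Po(L)=\epsilon_{L/K}(\Po(K)),
\]
after which the conclusion is immediate: by the Convention and Definition \ref{definition, Polya extension}, $L/K$ being a P\'olya extension means exactly $\epsilon_{L/K}(\Po(K))=\{0\}$, forcing $\Po(L)=\{0\}$.

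First I would record the ramification comparison. Since $L/K$ is unramified at all finite places, a finite prime $p$ of $\mathbb{Q}$ ramifies in $L$ if and only if it ramifies in $K$: for $\mathfrak{P}\mid\mathfrak{p}\mid p$ one has $e_{p(L/\mathbb{Q})}=e_{\mathfrak{p}(K/\mathbb{Q})}\,e_{\mathfrak{P}(L/K)}$ with $e_{\mathfrak{P}(L/K)}=1$. Next, as both $K/\mathbb{Q}$ and $L/\mathbb{Q}$ are Galois, all primes above a given $p$ share a common residue degree, so the relevant Ostrowski ideals are the full products $\Pi_{p^{f_K}}(K)=\prod_{\mathfrak{p}\mid p}\mathfrak{p}$ and $\Pi_{p^{f_L}}(L)=\prod_{\mathfrak{P}\mid p}\mathfrak{P}$. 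Because $p\mathcal{O}_L=\big(\prod_{\mathfrak{P}\mid p}\mathfrak{P}\big)^{e_{p(L/\mathbb{Q})}}$ is principal, the classes attached to unramified $p$ vanish; hence $\Po(L)$ is generated by the classes $\big[\Pi_{p^{f_L}}(L)\big]$ over the finitely many ramified $p$, and likewise $\Po(K)$ is generated by the $\big[\Pi_{p^{f_K}}(K)\big]$.

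The heart of the argument is then the computation $\epsilon_{L/K}\big(\big[\Pi_{p^{f_K}}(K)\big]\big)=\big[\Pi_{p^{f_L}}(L)\big]$: extending $\prod_{\mathfrak{p}\mid p}\mathfrak{p}$ to $L$ gives $\prod_{\mathfrak{p}\mid p}\prod_{\mathfrak{P}\mid\mathfrak{p}}\mathfrak{P}^{e_{\mathfrak{P}(L/K)}}=\prod_{\mathfrak{P}\mid p}\mathfrak{P}$, where unramifiedness again kills every exponent. Thus $\epsilon_{L/K}$ carries the generating set of $\Po(K)$ onto that of $\Po(L)$, which gives $\Po(L)\subseteq\epsilon_{L/K}(\Po(K))$; the reverse inclusion $\epsilon_{L/K}(\Po(K))\subseteq\Po(L)$ is already supplied by part $(i)$ of Lemma \ref{lemma, for P sub N sub M, if M/N is Galois are Galois, then Po(M/P) is contained in Po(M/N)} with $F=\mathbb{Q}$. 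Combining the two inclusions yields the displayed identity.

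The only delicate point is bookkeeping rather than theory: one must check that extension of ideals commutes with taking the full product of primes above $p$, and that no factor with the ``wrong'' norm sneaks into an Ostrowski ideal. Both follow cleanly from $K/\mathbb{Q}$, $L/\mathbb{Q}$ being Galois together with $L/K$ being unramified at finite places, so I expect no genuine obstacle; the proof is essentially a transparent generator-by-generator matching once the ramification comparison of the second paragraph is in place.
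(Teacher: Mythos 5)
Your proof is correct, but it takes a genuinely different route from the paper. The paper never argues directly from the Ostrowski ideals here: it states Proposition \ref{proposition, Leriche result on relation between Polya fields and Polya extensions} as a citation of Leriche and then recovers (and generalizes) it via Theorem \ref{theorem, generalization of Prop. 3.4. Leriche 2014}, whose proof compares the two \eqref{equation, main exact sequence for the Ostrowski quotient} sequences for $K/F$ and $L/F$ through inflation on $H^1(\cdot,U)$ and applies the snake lemma; specializing part $(iii)$ of Corollary \ref{corollary, some consequences of theorem generalizing Prop. 3.4. Leriche} to $F=\mathbb{Q}$, where the middle sum $\bigoplus_{\mathfrak{P}}\mathbb{Z}/e_{\mathfrak{P}(L/K)}\mathbb{Z}$ vanishes by the unramifiedness hypothesis, yields $\Po(L)=\epsilon_{L/K}(\Po(K))=\{0\}$. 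You arrive at the same key identity $\Po(L)=\epsilon_{L/K}(\Po(K))$ by an elementary generator-by-generator computation: since $K/\mathbb{Q}$ and $L/\mathbb{Q}$ are Galois, the only possibly nontrivial generators are the classes of $\prod_{\mathfrak{P}\mid p}\mathfrak{P}$ for ramified $p$; the ramified rational primes for $K$ and for $L$ coincide because $L/K$ is unramified at all finite places; and extension of ideals carries $\prod_{\mathfrak{p}\mid p}\mathfrak{p}$ to $\prod_{\mathfrak{P}\mid p}\mathfrak{P}$ because every $e_{\mathfrak{P}(L/K)}$ equals $1$. All of these steps are sound, and your appeal to part $(i)$ of Lemma \ref{lemma, for P sub N sub M, if M/N is Galois are Galois, then Po(M/P) is contained in Po(M/N)} for the reverse inclusion is not even needed, since a group homomorphism sends the subgroup generated by a set onto the subgroup generated by the images of that set. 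What the paper's machinery buys is the finer four-term exact sequence that measures the failure of the conclusion when $L/K$ does ramify; what your argument buys is a short, self-contained, purely ideal-theoretic proof that avoids cohomology entirely.
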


We aim to generalize the above result of Leriche. First we relativize the notion of P\'olya extension:

\begin{definition} \label{definition, relative Polya extension}
For $F \subseteq K \subseteq L$  a tower of finite extensions of number fields, the extension $L/K$ is called an \textit{$F$-relative P\'olya extension} whenever $\epsilon_{L/K}(\Po(K/F))=\{0\}$. Note that $\mathbb{Q}$-relative P\'olya extension is the same notion as in Definition \ref{definition, Polya extension}. 
\end{definition}

\begin{theorem} \label{theorem, generalization of Prop. 3.4. Leriche 2014}
Let $F \subseteq K \subseteq L$ be a tower of finite extensions of number fields. 
If both $K/F$ and $L/F$ are Galois extensions, then the following sequence is exact:
\begin{equation*} 
\{0 \} \rightarrow \Ker(\psi) \rightarrow \Coker(\gamma) \rightarrow \bigoplus _{\mathfrak{P}} \frac{\mathbb{Z}}{e_{\mathfrak{P} (L/K)} \mathbb{Z}}   \rightarrow  \frac{\Ost(L/F)}{\psi(\Ost(K/F))} \rightarrow \{0 \},
\end{equation*}
 where the maps $\gamma$ and $\psi$ are defined as follows ($\theta_{K/F}$ and $\theta_{L/F}$ are as in exact sequence \eqref{equation, main exact sequence for the Ostrowski quotient} and ``$\Inf$'' denotes the inflation map):
\begin{align*}
	\gamma:\frac{H^1(Gal(K/F),U_K)}{\theta_{K/F}\left(\Ker(\epsilon_{K/F})\right)} & \rightarrow \frac{H^1(Gal(L/F),U_L)}{\theta_{L/F}\left(\Ker(\epsilon_{L/F})\right)} \\
	[\sigma] \, \left(  \mathrm{mod}\, \, \theta_{K/F}\left(\Ker(\epsilon_{K/F})\right) \right)& \mapsto \Inf([\sigma]) \left(  \mathrm{mod}\, \, \theta_{L/F}\left(\Ker(\epsilon_{L/F})\right) \right)
\end{align*} 
and
 \begin{align*}
 \psi: \Ost(K/F) &\rightarrow \Ost(L/F) \\
 [\mathfrak{a}]  \left( \mathrm{mod}\, \, \epsilon_{K/F}(\Cl(F)) \right) & \mapsto \epsilon_{L/K}([\mathfrak{a}])  \left( \mathrm{mod}\, \, \epsilon_{L/F}(\Cl(F)) \right)
 \end{align*} 
\end{theorem}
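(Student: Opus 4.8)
The plan is to realize the asserted four-term sequence as the output of the snake lemma applied to a morphism between the two Ostrowski--BRZ short exact sequences attached to $K/F$ and to $L/F$. Writing $\bar G = Gal(K/F)$ and $G = Gal(L/F)$, the sequence \eqref{equation, main exact sequence for the Ostrowski quotient} for each of the Galois extensions $K/F$ and $L/F$ splits off a short exact sequence
\begin{equation*}
\{0\}\to A_{K/F}\to \bigoplus_{\mathfrak{Q}}\frac{\mathbb{Z}}{e_{\mathfrak{Q}(K/F)}\mathbb{Z}}\to\Ost(K/F)\to\{0\},
\end{equation*}
where $A_{K/F}:=H^1(\bar G,U_K)/\theta_{K/F}(\Ker(\epsilon_{K/F}))$ and the sum runs over primes $\mathfrak{Q}$ of $F$; likewise for $L/F$ with $A_{L/F}:=H^1(G,U_L)/\theta_{L/F}(\Ker(\epsilon_{L/F}))$ and the corresponding sum over the $e_{\mathfrak{Q}(L/F)}$. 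I would then link these two rows by three vertical maps: the map $\gamma$ of the statement (descended from inflation), the map $\psi$ of the statement (induced by $\epsilon_{L/K}$, which lands in $\Po(L/F)$ by part $(i)$ of Lemma \ref{lemma, for P sub N sub M, if M/N is Galois are Galois, then Po(M/P) is contained in Po(M/N)}), and a middle map $\rho$ between the ramification terms induced by extension of ideals $j_{L/K}$.

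First I would verify that the two squares commute and that all three maps are well defined on the relevant quotients. The only delicate point is $\gamma$, and here I would use the explicit description of the connecting map $\theta$: for $[\mathfrak{a}]\in\Ker(\epsilon_{K/F})$ with $\mathfrak{a}\mathcal{O}_K=(\alpha)$ one has $\theta_{K/F}([\mathfrak{a}])=[\,\sigma\mapsto\sigma(\alpha)/\alpha\,]$. Since $\mathfrak{a}\mathcal{O}_K=(\alpha)$ forces $\mathfrak{a}\mathcal{O}_L=(\alpha)$, the inclusion $\Ker(\epsilon_{K/F})\subseteq\Ker(\epsilon_{L/F})$ holds and $\Inf\circ\theta_{K/F}=\theta_{L/F}$ on it; this simultaneously shows that $\Inf$ carries $\theta_{K/F}(\Ker(\epsilon_{K/F}))$ into $\theta_{L/F}(\Ker(\epsilon_{L/F}))$ (so $\gamma$ descends) and gives commutativity of the left square. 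The right square is the identity $[\mathfrak{a}\mathcal{O}_L]=\epsilon_{L/K}([\mathfrak{a}])$ read modulo $\epsilon_{L/F}(\Cl(F))$.

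The heart of the computation is the middle map $\rho$. On the summand indexed by a prime $\mathfrak{Q}$ of $F$, the generator $\prod_{\mathfrak{P}\mid\mathfrak{Q}}\mathfrak{P}$ of the ambiguous ideals of $K$ extends to $\bigl(\prod_{\mathfrak{C}\mid\mathfrak{Q}}\mathfrak{C}\bigr)^{e_{\mathfrak{P}(L/K)}}$ in $L$, since in a Galois tower all primes of $K$ above $\mathfrak{Q}$ share a single value $e_{\mathfrak{P}(L/K)}$ and $e_{\mathfrak{Q}(L/F)}=e_{\mathfrak{Q}(K/F)}\,e_{\mathfrak{P}(L/K)}$. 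Hence, $\mathfrak{Q}$ by $\mathfrak{Q}$, $\rho$ is the map $\mathbb{Z}/e_{\mathfrak{Q}(K/F)}\mathbb{Z}\xrightarrow{\times e_{\mathfrak{P}(L/K)}}\mathbb{Z}/e_{\mathfrak{Q}(L/F)}\mathbb{Z}$. Multiplicativity of ramification indices makes this injective, so $\Ker(\rho)=0$, while its cokernel is $\bigoplus_{\mathfrak{Q}}\mathbb{Z}/e_{\mathfrak{P}(L/K)}\mathbb{Z}$ — precisely the ramification term $\bigoplus_{\mathfrak{P}}\frac{\mathbb{Z}}{e_{\mathfrak{P}(L/K)}\mathbb{Z}}$ of the statement, with one factor for each prime of $F$ ramified in $L/K$.

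Finally I would feed the diagram into the snake lemma. Because $\Ker(\rho)=0$, the six-term sequence forces $\Ker(\gamma)=0$ and the connecting map $\Ker(\psi)\to\Coker(\gamma)$ becomes injective, so everything collapses to
\begin{equation*}
\{0\}\to\Ker(\psi)\to\Coker(\gamma)\to\Coker(\rho)\to\Coker(\psi)\to\{0\},
\end{equation*}
and substituting $\Coker(\rho)=\bigoplus_{\mathfrak{P}}\frac{\mathbb{Z}}{e_{\mathfrak{P}(L/K)}\mathbb{Z}}$ together with $\Coker(\psi)=\Ost(L/F)/\psi(\Ost(K/F))$ yields the claim. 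The step I expect to be most error-prone is the bookkeeping of the middle term: one must keep the cokernel of $\rho$ summed over primes of $F$ (with the common index $e_{\mathfrak{P}(L/K)}$) rather than over all primes of $K$, and it is worth confirming this in an example where a prime of $F$ splits in $K$ and then ramifies in $L/K$, so that the order is the one genuinely forced by exactness.
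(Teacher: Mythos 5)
Your proposal is correct and follows essentially the same route as the paper: both form a morphism between the two short exact sequences $\{0\}\to H^1/\theta(\Ker\epsilon)\to\bigoplus\mathbb{Z}/e\mathbb{Z}\to\Ost\to\{0\}$ extracted from \eqref{equation, main exact sequence for the Ostrowski quotient} for $K/F$ and $L/F$, check that $\gamma$ and $\psi$ descend, and apply the snake lemma using injectivity of the middle map. Your explicit identification of that middle map as componentwise multiplication by $e_{\mathfrak{P}(L/K)}$, with cokernel summed over primes of $F$, is a point the paper leaves implicit (it merely labels the map ``inclusion''), and your verification of it is accurate.
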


\begin{proof}
Using exact sequence \eqref{equation, main exact sequence for the Ostrowski quotient} we find the following commutative diagram with exact rows of abelian groups:
{\footnotesize
	\begin{equation} \label{equation, commutative diagram for Galois extensions}
	\xymatrix{
		\{0 \} \ar[r] & \Ker(\epsilon_{K/F})   \ar[r]^{\theta_{K/F}} \ar[d]^{\text{inclusion}}  & H^1(Gal(K/F),U_K) \ar[r] \ar[d]^{\Inf} &  \bigoplus_{\mathfrak{P}}  \frac{\mathbb{Z}}{e_{\mathfrak{P} (K/F)} \mathbb{Z}} \ar[r] \ar[d]^{\text{inclusion}} & \Ost(K/F) \ar[d]^{\psi} \ar[r] & \{0\} \\
		\{0\} \ar[r] &\Ker(\epsilon_{L/F})  \ar[r]^{\theta_{L/F}}  & H^1(Gal(L/F),U_L)  \ar[r] & \bigoplus_{\mathfrak{P}}  \frac{\mathbb{Z}}{e_{\mathfrak{P} (L/F)} \mathbb{Z}} \ar[r] & \Ost(L/F) \ar[r] & \{0\} 
		 }
	\end{equation}}
Note that  $\psi$ is well-defined:

For $[\mathfrak{a}_1], [\mathfrak{a}_2] \in \Po(K/F)$, if $[\mathfrak{a}_1]=[\mathfrak{a}_2] \left( \mathrm{mod}\, \, \epsilon_{K/F}(\Cl(F)) \right) $, then $[\mathfrak{a}_1].[\mathfrak{a}_2]^{-1}=\epsilon_{K/F}([\mathfrak{b}])$ for some $[\mathfrak{b}] \in \Cl(F)$. Hence
\begin{equation*}
	\epsilon_{L/K}([\mathfrak{a}_1].[\mathfrak{a}_2]^{-1})=\epsilon_{L/K}(\epsilon_{K/F}([\mathfrak{b}]))=\epsilon_{L/F}([\mathfrak{b}]) \in \epsilon_{L/F} (\Cl(F)).
\end{equation*}
 
Equivalently, the following  diagram is commutative:
{\footnotesize
	\begin{displaymath}
\xymatrix{
	\{0 \} \ar[r] & \frac{H^1(Gal(K/F),U_K)}{\theta_{K/F}\left(\Ker(\epsilon_{K/F})\right)}  \ar[r] \ar[d]^{\gamma}  &  \bigoplus_{\mathfrak{P}}  \frac{\mathbb{Z}}{e_{\mathfrak{P} (K/F)} \mathbb{Z}} \ar[r] \ar[d]^{\text{inclusion}} & \Ost(K/F) \ar[d]^{\psi} \ar[r] & \{0\} \\
	\{0\} \ar[r] &\frac{H^1(Gal(L/F),U_L)}{\theta_{L/F}\left(\Ker(\epsilon_{L/F})\right)}  \ar[r]  & \bigoplus_{\mathfrak{P}}  \frac{\mathbb{Z}}{e_{\mathfrak{P} (L/F)} \mathbb{Z}} \ar[r] & \Ost(L/F) \ar[r] & \{0\} 
}
\end{displaymath}}

We show $\gamma$ is also well-defined:

For $[\sigma_1], [\sigma_2] \in H^1(Gal(K/F),U_K)$, if $[\sigma_1] =[\sigma_2] \left( \mathrm{mod}\, \theta_{K/F}\left(\Ker(\epsilon_{K/F})\right) \right)$, then $[\sigma_1]. [\sigma_2]^{-1} =\theta_{K/F}\left([\mathfrak{a}] \right)$ for some $[\mathfrak{a}] \in \Ker(\epsilon_{K/F})$. Using diagram \eqref{equation, commutative diagram for Galois extensions} we have
\begin{equation*}
\Inf([\sigma_1]. [\sigma_2]^{-1}) =\Inf(\theta_{K/F}\left([\mathfrak{a}] \right))=\theta_{L/F}\left([\mathfrak{a}] \right) \in \theta_{L/F}\left(\Ker(\epsilon_{L/F})) \right).
\end{equation*}
(Note that $\Ker(\epsilon_{K/F}) \subseteq \Ker(\epsilon_{L/F})$). Using the snake lemma, we obtain the desired exact sequence (Note that the snake lemma also yields $\gamma$ is one-to-one.).
\end{proof}

\begin{corollary} \label{corollary, some consequences of theorem generalizing Prop. 3.4. Leriche}
With the same notations and assumptions in Theorem  \ref{theorem, generalization of Prop. 3.4. Leriche 2014}, the following assertions hold:
\begin{itemize}
\item[(i)] If all finite places of $K$ are unramified in $L$, then $\Ost(L/F)=\psi(\Ost(K/F))$.
\item[(ii)] If $L/K$ is an $F$-relative P\'olya extension, then the following sequence is exact:
\begin{equation*} 
\{0 \} \rightarrow \Ost(K/F) \rightarrow \Coker(\gamma) \rightarrow \bigoplus _{\mathfrak{P}} \frac{\mathbb{Z}}{e_{\mathfrak{P} (L/K)} \mathbb{Z}}   \rightarrow  \Ost(L/F) \rightarrow \{0 \}.
\end{equation*}
\item[(iii)] If $\epsilon_{K/F}(\Cl(F))=\{0\}$, then the following sequence is exact: 
{\footnotesize
\begin{equation*} 
	\{0 \} \rightarrow \Ker(\epsilon_{L/K}\mid_{\Po(K/F)}) \rightarrow \Coker(\Inf) \rightarrow \bigoplus _{\mathfrak{P}} \frac{\mathbb{Z}}{e_{\mathfrak{P} (L/K)} \mathbb{Z}}   \rightarrow  \frac{\Po(L/F)}{\epsilon_{L/K}(\Po(K/F))} \rightarrow \{0 \}.
\end{equation*}}

\end{itemize}
\end{corollary}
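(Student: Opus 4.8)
The plan is to deduce all three parts directly from the four-term exact sequence of Theorem \ref{theorem, generalization of Prop. 3.4. Leriche 2014}, in each case specializing that sequence under the given hypothesis and identifying its terms; no new input is required beyond tracking how each assumption collapses the relevant kernels, images, and quotients. I would begin with (i): the hypothesis that every finite place of $K$ is unramified in $L$ forces $e_{\mathfrak{P}(L/K)}=1$ for every prime $\mathfrak{P}$, so the middle term $\bigoplus_{\mathfrak{P}}\mathbb{Z}/e_{\mathfrak{P}(L/K)}\mathbb{Z}$ is trivial. Exactness at the final nonzero term then forces $\Ost(L/F)/\psi(\Ost(K/F))=\{0\}$, which is precisely the asserted equality $\Ost(L/F)=\psi(\Ost(K/F))$.

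For (ii) I would unwind Definition \ref{definition, relative Polya extension}: the hypothesis that $L/K$ is an $F$-relative P\'olya extension means $\epsilon_{L/K}(\Po(K/F))=\{0\}$. Since $\psi$ carries the class of $[\mathfrak{a}]\in\Po(K/F)$ to $\epsilon_{L/K}([\mathfrak{a}])$ modulo $\epsilon_{L/F}(\Cl(F))$, this makes $\psi$ the zero map. Hence $\Ker(\psi)=\Ost(K/F)$ and $\psi(\Ost(K/F))=\{0\}$, so the first and last nonzero terms of the sequence of Theorem \ref{theorem, generalization of Prop. 3.4. Leriche 2014} become $\Ost(K/F)$ and $\Ost(L/F)$ respectively, yielding the stated exact sequence verbatim.

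Part (iii) is where the bookkeeping is most delicate, and I expect the identification of cokernels to be the main obstacle. The hypothesis $\epsilon_{K/F}(\Cl(F))=\{0\}$, combined with $\epsilon_{L/F}=\epsilon_{L/K}\circ\epsilon_{K/F}$, gives $\epsilon_{L/F}(\Cl(F))=\{0\}$ as well; therefore $\Ost(K/F)=\Po(K/F)$ and $\Ost(L/F)=\Po(L/F)$ directly from Definition \ref{definition, Ostrowski group}, and $\psi$ reduces to the restriction $\epsilon_{L/K}\mid_{\Po(K/F)}$, which lands in $\Po(L/F)$ by Lemma \ref{lemma, for P sub N sub M, if M/N is Galois are Galois, then Po(M/P) is contained in Po(M/N)}$(i)$. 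This identifies $\Ker(\psi)=\Ker(\epsilon_{L/K}\mid_{\Po(K/F)})$ and the final term $\Ost(L/F)/\psi(\Ost(K/F))=\Po(L/F)/\epsilon_{L/K}(\Po(K/F))$. The remaining nontrivial point is to show $\Coker(\gamma)=\Coker(\Inf)$: the hypothesis forces $\Ker(\epsilon_{K/F})=\Ker(\epsilon_{L/F})=\Cl(F)$, so the leftmost vertical arrow in diagram \eqref{equation, commutative diagram for Galois extensions} is the identity and hence $\theta_{L/F}=\Inf\circ\theta_{K/F}$. A short computation then gives $\Coker(\gamma)\cong H^1(Gal(L/F),U_L)/\bigl(\Inf(H^1(Gal(K/F),U_K))+\theta_{L/F}(\Cl(F))\bigr)$, and since $\theta_{L/F}(\Cl(F))=\Inf(\theta_{K/F}(\Cl(F)))\subseteq\Inf(H^1(Gal(K/F),U_K))$ the extra summand is absorbed, leaving $\Coker(\gamma)\cong\Coker(\Inf)$. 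Substituting these four identifications into the sequence of Theorem \ref{theorem, generalization of Prop. 3.4. Leriche 2014} produces the claimed exact sequence.
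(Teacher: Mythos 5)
Your proposal is correct and follows the paper's proof exactly: the paper also deduces all three parts by specializing the exact sequence of Theorem \ref{theorem, generalization of Prop. 3.4. Leriche 2014} (noting for part (ii) that $\psi$ is the zero map), and your additional bookkeeping for part (iii) — in particular the verification that $\theta_{L/F}(\Cl(F))\subseteq\Inf(H^1(\Gal(K/F),U_K))$ so that $\Coker(\gamma)\cong\Coker(\Inf)$ — correctly fills in the details the paper leaves implicit.
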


\begin{proof}
Immediately follows from Theorem \ref{theorem, generalization of Prop. 3.4. Leriche 2014}. For proving part $(ii)$ note that since $L/K$ is an $F$-relative P\'olya extension, $\psi$ is the zero map.
	\end{proof}

\begin{remark}
Combining the first two parts or using part $(iii)$ of Corollary \ref{corollary, some consequences of theorem generalizing Prop. 3.4. Leriche}, we find a generalization of Leriche's result in Proposition \ref{proposition, Leriche result on relation between Polya fields and Polya extensions}.
\end{remark}

We recall that for a finite extension  $K/F$ of number fields, the \textit{relative genus field} of $K$ over $F$, denoted by $\Gamma(K/F)$, is the maximal abelian unramified extension of $K$ of the form $KF_{*}$ for some abelian extension $F_{*}/F$ \cite{Cornell}. By the \textit{genus field} of $K$,  denoted by $\Gamma(K)$, we mean the absolute genus field of $K$ (i.e. $F=\mathbb{Q}$). As an application of Proposition \ref{proposition, Leriche result on relation between Polya fields and Polya extensions}, Leriche proved that:
\begin{proposition} \cite[Proposition 3.4, Theorem 3.8]{Leriche 2014} \label{proposition, Leriche result about genus field}
If $K$ is an abelian number field then the genus field $\Gamma(K)$ of $K$ is P\'olya.
\end{proposition}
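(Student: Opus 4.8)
The plan is to realize $\Gamma(K)$ as a compositum of Pólya fields attached to single ramified primes, and then feed the resulting capitulation into Proposition~\ref{proposition, Leriche result on relation between Polya fields and Polya extensions}. First I would record the explicit structure of the genus field of an abelian field: writing $K$ through its Dirichlet character group $X$ and decomposing each $\chi\in X$ into its local components $\chi=\prod_p\chi_p$, the genus field is $\Gamma(K)=\prod_p K_p$, where $K_p$ is the abelian field cut out by the $p$-components $\{\chi_p:\chi\in X\}$. Each $K_p$ lies in some $\mathbb{Q}(\zeta_{p^a})$, so $K_p/\mathbb{Q}$ is abelian and ramified only at $p$; since $\mathbb{Q}(\zeta_{p^a})/\mathbb{Q}$ is totally ramified at $p$, so is $K_p/\mathbb{Q}$, and there is a unique prime $\mathfrak{p}_p$ of $K_p$ above $p$. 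From this description $\Gamma(K)/\mathbb{Q}$ is abelian, hence both $K/\mathbb{Q}$ and $\Gamma(K)/\mathbb{Q}$ are Galois and $\Gamma(K)/K$ is unramified at every finite place; these are exactly the standing hypotheses of Proposition~\ref{proposition, Leriche result on relation between Polya fields and Polya extensions} apart from the Pólya-extension condition.

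The heart of the argument is to show that $\Gamma(K)/K$ is a Pólya extension, i.e. $\epsilon_{\Gamma(K)/K}(\Po(K))=\{0\}$. Since $K/\mathbb{Q}$ is Galois, $\Po(K)$ is generated by the classes of the ambiguous ideals $\mathfrak{a}_p:=\prod_{\mathfrak{P}\mid p}\mathfrak{P}$ as $p$ runs over the primes ramified in $K$ (for unramified $p$ one has $\prod_{\mathfrak{P}\mid p}\mathfrak{P}=(p)$), so it suffices to prove that each $\mathfrak{a}_p\mathcal{O}_{\Gamma(K)}$ is principal. Because $\Gamma(K)/K$ is unramified at $p$, extending ideals gives $\mathfrak{a}_p\mathcal{O}_{\Gamma(K)}=\prod_{\mathfrak{Q}\mid p}\mathfrak{Q}=:\mathcal{Q}_p$, the product of all primes of $\Gamma(K)$ over $p$. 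I would then bring in $K_p$: by Proposition~\ref{proposition, Zantema result about abelian number fields} the field $K_p$ is Pólya, and as $p$ is totally ramified in $K_p$ the unique prime $\mathfrak{p}_p$ is exactly the Ostrowski ideal $\Pi_{p}(K_p)$, hence principal, say $\mathfrak{p}_p=(\alpha_p)$.

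It remains to identify $\mathfrak{p}_p\mathcal{O}_{\Gamma(K)}$ with $\mathcal{Q}_p$; this is the step I expect to be the main obstacle, since it requires pinning down the ramification of $p$ in $\Gamma(K)$. The cleanest route is to show $e_{p}(\Gamma(K)/\mathbb{Q})=e_{p}(K_p/\mathbb{Q})=[K_p:\mathbb{Q}]$, equivalently that $\Gamma(K)/K_p$ is unramified at $p$. Writing $M:=\prod_{q\neq p}K_q$, which is unramified at $p$, and using $\Gamma(K)=K_pM$, the inertia subgroup $T_p\subseteq\Gal(\Gamma(K)/\mathbb{Q})$ is contained in $\Gal(\Gamma(K)/M)$ and restricts onto the full inertia group of $K_p$, while $\Gal(\Gamma(K)/M)\cap\Gal(\Gamma(K)/K_p)=\Gal(\Gamma(K)/K_pM)=\{1\}$; hence $T_p\cong\Gal(K_p/\mathbb{Q})$ and $|T_p|=[K_p:\mathbb{Q}]$. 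Consequently $p\mathcal{O}_{\Gamma(K)}=\mathcal{Q}_p^{[K_p:\mathbb{Q}]}=(\mathfrak{p}_p\mathcal{O}_{\Gamma(K)})^{[K_p:\mathbb{Q}]}$, and unique factorization of ideals forces $\mathfrak{p}_p\mathcal{O}_{\Gamma(K)}=\mathcal{Q}_p$. Therefore $\mathfrak{a}_p\mathcal{O}_{\Gamma(K)}=\mathcal{Q}_p=(\alpha_p)\mathcal{O}_{\Gamma(K)}$ is principal for every ramified $p$, so $\epsilon_{\Gamma(K)/K}(\Po(K))=\{0\}$ and $\Gamma(K)/K$ is a Pólya extension. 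Finally I would invoke Proposition~\ref{proposition, Leriche result on relation between Polya fields and Polya extensions}, with the already-verified Galois and finite-unramifiedness hypotheses, to conclude that $\Gamma(K)$ is a Pólya field. (Equivalently, the same computation shows directly that $\Po(\Gamma(K))$ is generated by the classes $[\mathcal{Q}_p]=\{0\}$, bypassing Proposition~\ref{proposition, Leriche result on relation between Polya fields and Polya extensions}.)
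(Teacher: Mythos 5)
The paper itself gives no proof of this proposition---it is quoted from Leriche, with only the remark that it is an application of Proposition~\ref{proposition, Leriche result on relation between Polya fields and Polya extensions}---so there is no in-text argument to compare against; judged on its own, your proof is correct, and it is essentially a reconstruction of Leriche's original argument. The decomposition $\Gamma(K)=\prod_p K_p$, with $K_p$ cut out by the $p$-components of the characters of $K$, is the standard (Leopoldt) description of the genus field of an abelian field; each $K_p$ is P\'olya by Proposition~\ref{proposition, Zantema result about abelian number fields}; and your inertia-group computation showing $e_p(\Gamma(K)/\mathbb{Q})=[K_p:\mathbb{Q}]$, hence $\mathfrak{p}_p\mathcal{O}_{\Gamma(K)}=\prod_{\mathfrak{Q}\mid p}\mathfrak{Q}$ by unique factorization of ideals, is sound and is exactly the point that makes the capitulation of $\Po(K)$ in $\Gamma(K)$ work. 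Two remarks. First, you silently use the convention that the genus field is only required to be unramified at the \emph{finite} places (Leriche's convention, consistent with the paper's later use of $\Gamma(K/F)$); under the all-places convention $\Gamma(K)$ need not equal $\prod_p K_p$, so it is worth stating the convention. Second, your closing parenthetical deserves to be the main line of the proof: since the finite primes ramified in $\Gamma(K)/\mathbb{Q}$ are exactly those ramified in $K/\mathbb{Q}$, showing each $\mathcal{Q}_p$ principal already exhibits every generator of $\Po(\Gamma(K))$ as trivial, so the detour through the P\'olya-extension criterion (and the verification of its Galois and unramifiedness hypotheses) can be dropped entirely.
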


Unlike the Hilbert class field, the relative P\'olya group of the genus field, even for abelian number fields, is not necessarily trivial. For instance, one can show that for $K=\mathbb{Q}(\sqrt{-23})$, we have $\Po(\Gamma(K)/K)=\Cl(K)\simeq \mathbb{Z}/3\mathbb{Z}$,
see \cite[Example 2.14]{MR2}. Whereas using the following  principal ideal theorem of Terada \cite{Terada}, one can generalize Proposition \ref{proposition, Leriche result about genus field} for Ostrowski quotients in ``cyclic extensions'':

\begin{proposition} \cite{Terada} \label{proposition, Trada's principal ideal theorem}
Let $K/F$ be a finite cyclic extension of number fields. Then every ambiguous ideal class in $K/F$ will be principal in $\Gamma(K/F)$.
\end{proposition}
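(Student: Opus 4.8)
The plan is to translate the statement into class field theory and reduce it to a purely group-theoretic assertion about the transfer (Verlagerung), exactly as Artin reduced Furtwängler's classical principal ideal theorem. First I would record the genus-field dictionary. Writing $G=\Gal(K/F)=\langle\sigma\rangle$, the relative genus field $\Gamma(K/F)$ is unramified abelian over $K$, hence $K\subseteq\Gamma(K/F)\subseteq H(K)$; since $K/F$ is abelian, $\Gamma(K/F)$ is exactly the maximal subfield of $H(K)$ that is abelian over $F$. Under the Artin isomorphism $\Cl(K)\simeq\Gal(H(K)/K)$ one then computes $\Gal(H(K)/\Gamma(K/F))=\Cl(K)^{1-\sigma}$ by identifying it with the commutator subgroup $[\Gal(H(K)/F),\Gal(H(K)/F)]$, so that $\Gal(\Gamma(K/F)/K)\simeq\Cl(K)/\Cl(K)^{1-\sigma}$. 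An ambiguous ideal class is precisely an element of the $G$-invariants $\Cl(K)^G$ for the conjugation action.

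Next I would realize the capitulation map as a transfer. Put $\mathcal{G}=\Gal(H(\Gamma(K/F))/F)$, with subgroups $A=\Gal(H(\Gamma(K/F))/\Gamma(K/F))$ and $N=\Gal(H(\Gamma(K/F))/K)$, so that $A\trianglelefteq N\trianglelefteq\mathcal{G}$, the group $A=\Cl(\Gamma(K/F))$ is abelian, $N^{ab}\simeq\Cl(K)$ (the maximal unramified abelian subextension over $K$ being $H(K)$), and $\mathcal{G}/N=G$ is cyclic. By Artin's theorem identifying capitulation with the transfer, the extension map $\epsilon_{\Gamma(K/F)/K}\colon\Cl(K)\to\Cl(\Gamma(K/F))$ is the transfer $\Ver_{N\to A}\colon N^{ab}\to A$. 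The whole statement then becomes the group-theoretic claim that $\Ver_{N\to A}$ annihilates the invariants $(N^{ab})^G=\Cl(K)^G$.

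For the group-theoretic core I would exploit the cyclicity of $\mathcal{G}/N$ through an explicit transfer computation with the coset representatives $1,\sigma,\dots,\sigma^{[K:F]-1}$. The structural inputs are that $\Gal(\Gamma(K/F)/F)=\mathcal{G}/A$ is abelian, so $[\mathcal{G},\mathcal{G}]\subseteq A$, together with transitivity of the transfer, $\Ver_{\mathcal{G}\to A}=\Ver_{N\to A}\circ\Ver_{\mathcal{G}\to N}$, and the fact that for a cyclic quotient $\Ver_{\mathcal{G}\to N}$ is computed by the algebraic norm $N_G=\prod_i\sigma^i(\cdot)$ on $N$ together with the class of $\sigma^{[K:F]}$. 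Feeding a $\sigma$-fixed class through these identities, the cyclic relation forces its image in $A$ to telescope to the identity; this is precisely the Tannaka–Terada principal ideal theorem, the cyclic analogue of Furtwängler's theorem.

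The hard part will be this final transfer computation, and in particular the essential use of cyclicity: the argument breaks down once $\mathcal{G}/N$ is merely abelian, which matches the fact that the cyclicity hypothesis cannot be dropped. I would also take care that the argument covers the full invariant group $\Cl(K)^G$ and not merely the strongly ambiguous classes $\Po(K/F)$ generated by ambiguous ideals; since the transfer description sees all of $(N^{ab})^G$ at once, no separate treatment of the non-strongly-ambiguous part is needed, and the proof concludes that every ambiguous ideal class capitulates in $\Gamma(K/F)$.
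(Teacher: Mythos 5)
First, a remark on the comparison you were asked for: the paper offers no proof of this proposition at all --- it is imported verbatim from Terada's 1971 paper --- so the only thing to measure your proposal against is whether it would actually establish the theorem. Your first half is correct and is indeed the standard opening of every known proof: $\Gamma(K/F)$ is the fixed field of $[\Gal(H(K)/F),\Gal(H(K)/F)]=\Cl(K)^{1-\sigma}$, the capitulation map $\epsilon_{\Gamma(K/F)/K}$ is the transfer $\Ver_{N\to A}$ with $N=\Gal(H(\Gamma(K/F))/K)$ and $A=\Gal(H(\Gamma(K/F))/\Gamma(K/F))$, and the proposition becomes the assertion that this transfer annihilates $(N^{ab})^{G}\simeq\Cl(K)^{G}$. (One can even sharpen your containment $[\mathcal{G},\mathcal{G}]\subseteq A$ to an equality, since any abelian-over-$F$ subfield of $H(\Gamma(K/F))$ composed with $K$ is unramified over $K$ and hence lies in $\Gamma(K/F)$ by maximality.)

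The genuine gap is the final step, and the sketch you give for it cannot close. The only identities you put on the table are transitivity, $\Ver_{\mathcal{G}\to A}=\Ver_{N\to A}\circ\Ver_{\mathcal{G}\to N}$, and the fact that the composite $N\to\mathcal{G}^{ab}\to N^{ab}$ is the algebraic norm $N_{G}$. Since $A=[\mathcal{G},\mathcal{G}]$, the Furtw\"angler--Artin--Iyanaga principal ideal theorem makes $\Ver_{\mathcal{G}\to A}$ trivial, so this machinery shows exactly that $\Ver_{N\to A}$ kills $N_{G}(\Cl(K))$ and the class of $\tilde\sigma^{[K:F]}$. But on a $\sigma$-fixed class $c$ one has $N_{G}(c)=c^{[K:F]}$, so all you obtain is $\Ver_{N\to A}(c)^{[K:F]}=1$, i.e.\ that ambiguous classes capitulate up to $[K:F]$-torsion; the discrepancy between $\Cl(K)^{G}$ and $N_{G}(\Cl(K))$ is a generally nontrivial $\hat H^{0}$, and no telescoping over the coset representatives $1,\sigma,\dots,\sigma^{[K:F]-1}$ removes it. Saying that ``the cyclic relation forces the image to telescope to the identity; this is precisely the Tannaka--Terada principal ideal theorem'' is a restatement of the goal, not an argument: the group-theoretic assertion you have reduced to \emph{is} the theorem, and its known proofs (Terada's original one, or Miyake's module-theoretic treatment) require a substantially harder analysis of the extension $1\to N^{ab}\to\mathcal{G}/[N,N]\to G\to 1$ than anything outlined here. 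So the proposal gives a correct reduction but does not prove the proposition.
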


\begin{theorem} \label{theorem, Ostrowski quotient of genus field for cyclic extensions is trivial}
For $K/F$ a finite cyclic extension of number fields, the extension $\Gamma(K/F)/F$ is Ostrowski. 
\end{theorem}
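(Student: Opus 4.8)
The plan is to prove the triviality of $\Ost(\Gamma(K/F)/F)$ by showing that every relative Ostrowski ideal $\Pi_{\mathfrak{p}^f}(\Gamma(K/F)/F)$ lies in $\epsilon_{\Gamma(K/F)/F}(\Cl(F))$, exploiting the tower $F \subseteq K \subseteq \Gamma(K/F)$ together with the principal ideal theorem of Terada (Proposition \ref{proposition, Trada's principal ideal theorem}). Since $K/F$ is cyclic, it is in particular Galois, so $\Po(K/F)$ coincides with the group of classes of ambiguous ideals $I(K)^{\Gal(K/F)}/P(K)^{\Gal(K/F)}$; Terada's theorem says precisely that these ambiguous ideal classes capitulate in $\Gamma(K/F)$, i.e. $\epsilon_{\Gamma(K/F)/K}(\Po(K/F)) = \{0\}$. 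This is exactly the statement that $\Gamma(K/F)/K$ is an \emph{$F$-relative P\'olya extension} in the sense of Definition \ref{definition, relative Polya extension}.

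First I would set $L := \Gamma(K/F)$ and record that, by definition of the relative genus field, $L/K$ is abelian and unramified at all finite places of $K$; moreover $L/F$ is Galois (one should check that $\Gamma(K/F)/F$ is Galois, which follows from its characterization as $KF_*$ with $F_*/F$ abelian and the maximality/canonicity of the construction). With both $K/F$ and $L/F$ Galois in hand, the machinery of Theorem \ref{theorem, generalization of Prop. 3.4. Leriche 2014} and Corollary \ref{corollary, some consequences of theorem generalizing Prop. 3.4. Leriche} applies. The key input is that $L/K$ is $F$-relative P\'olya, which is Terada's theorem, so by part (ii) of Corollary \ref{corollary, some consequences of theorem generalizing Prop. 3.4. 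Leriche} the map $\psi$ is zero and we get the exact sequence
\begin{equation*}
\{0\} \rightarrow \Ost(K/F) \rightarrow \Coker(\gamma) \rightarrow \bigoplus_{\mathfrak{P}} \frac{\mathbb{Z}}{e_{\mathfrak{P}(L/K)}\mathbb{Z}} \rightarrow \Ost(L/F) \rightarrow \{0\}.
\end{equation*}
But $L/K$ is unramified at all finite places, so the middle term $\bigoplus_{\mathfrak{P}} \mathbb{Z}/e_{\mathfrak{P}(L/K)}\mathbb{Z}$ vanishes, and then by part (i) of the same corollary $\Ost(L/F) = \psi(\Ost(K/F)) = \{0\}$, since $\psi$ is the zero map.

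Alternatively, and perhaps more transparently, I would give a direct argument mirroring the proof of Theorem \ref{theorem, Ostrowski quotient for Hilbert class field is trivial}: by Lemma \ref{lemma, for P sub N sub M, if M/N is Galois are Galois, then Po(M/P) is contained in Po(M/N)}(ii) we have $\Po(L/F) \subseteq \Po(L/K)$, and it suffices to understand $\epsilon_{L/F}(\Cl(F))$ against $\Po(L/F)$. The crux is that Terada's theorem gives $\epsilon_{L/K}(\Po(K/F)) = \{0\}$, and combining this with the chain relation $\epsilon_{L/F} = \epsilon_{L/K}\circ\epsilon_{K/F}$ together with Lemma \ref{lemma, for P sub N sub M, if M/N is Galois are Galois, then Po(M/P) is contained in Po(M/N)}(i) (which requires $K/F$ and $L/F$ Galois, giving $\epsilon_{L/K}(\Po(K/F)) \subseteq \Po(L/F)$) pins down how the ambiguous classes of $K/F$ sit inside $\Po(L/F)$. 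The main obstacle I anticipate is \emph{not} the cohomological bookkeeping but rather two more delicate points: (a) verifying that $\Gamma(K/F)/F$ is genuinely Galois so that the diagram \eqref{equation, commutative diagram for Galois extensions} is available, and (b) correctly translating Terada's statement ``ambiguous ideal \emph{classes} become principal'' into the precise assertion $\epsilon_{L/K}(\Po(K/F)) = \{0\}$ at the level of the relative P\'olya group, since $\Po(K/F)$ is generated by classes of relative Ostrowski ideals and one must confirm these are indeed ambiguous classes in $K/F$ (which holds because $K/F$ is Galois, by the identification $\Po(K/F) = I(K)^{\Gal(K/F)}/P(K)^{\Gal(K/F)}$ recalled in the proof of Theorem \ref{theorem, generalization of Hilbert's theorem 94}(ii)). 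Once these identifications are secured, the triviality of $\Ost(\Gamma(K/F)/F)$ follows immediately from either the exact-sequence route or the direct capitulation route.
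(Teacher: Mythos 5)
Your first route is precisely the paper's own proof: Terada's theorem (Proposition \ref{proposition, Trada's principal ideal theorem}) shows $\Gamma(K/F)/K$ is an $F$-relative P\'olya extension, and then part (ii) of Corollary \ref{corollary, some consequences of theorem generalizing Prop. 3.4. Leriche}, together with the vanishing of $\bigoplus_{\mathfrak{P}}\mathbb{Z}/e_{\mathfrak{P}(\Gamma(K/F)/K)}\mathbb{Z}$ because $\Gamma(K/F)/K$ is unramified at all finite places, forces $\Ost(\Gamma(K/F)/F)=\{0\}$. The two side issues you flag are genuine but harmless: $\Gamma(K/F)/F$ is Galois since it is the compositum $KF_*$ of two Galois extensions of $F$, and $\Po(K/F)$ consists of ambiguous ideal classes by the identification $\Po(K/F)=I(K)^{\Gal(K/F)}/P(K)^{\Gal(K/F)}$ already used in the paper.
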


\begin{proof}
By Proposition \ref{proposition, Trada's principal ideal theorem}, $\epsilon_{\Gamma(K/F)/K}(\Po(K/F))=\{0\}$, i.e. $\Gamma(K/F)/K$ is an $F$-relative P\'olya extension. Since all finite places of $K$ are unramified in $\Gamma(K/F)$, part $(ii)$ of Corollary \ref{corollary, some consequences of theorem generalizing Prop. 3.4. Leriche} proves that $\Ost(\Gamma(K/F)/F)=\{0\}$.
\end{proof}

\section{Relativization of the of pre-P\'olya group}
As stated before, for a Galois number field $L$ all the Ostrowski ideals above unramified primes in $L$ are automatically principal, whereas in the non-Galois case this is not true, in general. Based on this, Zantema introduced the notion of the \textit{pre-P\'olya field} and gave an interesting group theoretical description of the pre-P\'olya condition for number fields \cite[$\S$ 6]{Zantema}. Recently, the pre-P\'olya condition  has been generalized to the notion of the \textit{pre-P\'olya group} $\Po(-)_{nr}$ by Chabert and Halberstadt \cite{Chabert II}. 
Relativizing these notions, one can define:
\begin{definition} \label{definition, relative pre-Polya-Ostrowski group}
	The  \textit{pre-Ostrowski quotient} $\Ost(L/K)_{nr}$ for a finite extension $L/K$ of number fields, is the subgroup  of  $\Ost(L/K)$ defined as follows:
	\begin{equation} \label{equation, relative pre-Ostrowski group}
	\Ost(L/K)_{nr}=\frac{Po(L/K)_{nr}}{Po(L/K)_{nr} \cap\epsilon_{L/K}(Cl(K))},
	\end{equation}
	where 
	\begin{equation} \label{equation, relative pre-Polya group}
	Po(L/K)_{nr}=\left< \left[ \Pi_{\mathfrak{P}^f}(L/K)\right] \, :  \, \mathfrak{P} \, \text{is a prime in} \, K \, \text{unramified in} \, L \, , \, f \in \mathbb{N} \right>.
	\end{equation}
	In particular $\Ost(L/\mathbb{Q})_{nr}$ coincides with the ``\textit{pre-P\'olya group}'' $Po(L)_{nr}$ defined in \cite[$\S 1$]{Chabert II}. 
\end{definition}

\begin{lemma} \label{lemma, basic properties of pre-Ostrowski group}
For $L/K$  a finite extension of number fields,
\begin{itemize}
\item[(i)]
$\Ost(L/K)_{nr}$ is trivial if and only if $Po(L/K)_{nr} \subseteq \epsilon_{L/K}(\Cl(K))$. In particular if $L/K$ is a Galois extension, then $\Ost(L/K)_{nr} =\{0\}$.
\item[(ii)]
if $\epsilon_{L/K}(\Cl(K))=\{0\}$, then $\Ost(L/K)_{nr}=Po(L/K)_{nr}$;
\item[(iii)]
if $L/K$ is Galois and  all finite places of $K$ are unramified in $L$, then  
 $\Ost(L/K)_{nr}$ is trivial and
\begin{equation*}
\Po(L/K)_{nr}=\Po(L/K)=\epsilon_{L/K}(\Cl(K)).
\end{equation*}
\end{itemize}
\end{lemma}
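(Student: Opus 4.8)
The plan is to read off each assertion directly from the definition of $\Ost(L/K)_{nr}$ in Definition~\ref{definition, relative pre-Polya-Ostrowski group}, invoking the splitting behaviour of unramified primes in a Galois extension for the substantive claims and the exact sequence~\eqref{equation, main exact sequence} for the final equality.

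First, for part~(i): by definition $\Ost(L/K)_{nr} = \Po(L/K)_{nr}/\bigl(\Po(L/K)_{nr}\cap\epsilon_{L/K}(\Cl(K))\bigr)$, and for subgroups $A,B$ of the abelian group $\Cl(L)$ the quotient $A/(A\cap B)$ is trivial precisely when $A\subseteq B$; taking $A=\Po(L/K)_{nr}$ and $B=\epsilon_{L/K}(\Cl(K))$ gives the stated equivalence at once. For the ``in particular'' clause I would show that each generator of $\Po(L/K)_{nr}$ already lies in $\epsilon_{L/K}(\Cl(K))$ when $L/K$ is Galois. Indeed, let $\mathfrak{P}$ be a prime of $K$ unramified in $L$; since $L/K$ is Galois, the primes $\mathfrak{M}_1,\dots,\mathfrak{M}_g$ of $L$ above $\mathfrak{P}$ all share the common residue degree $f=f_{\mathfrak{P}(L/K)}$, so the relative Ostrowski ideal $\Pi_{\mathfrak{P}^f}(L/K)$ sweeps up exactly these primes and equals $\mathfrak{M}_1\cdots\mathfrak{M}_g$ (for any other exponent the defining product is empty and the ideal is $\mathcal{O}_L$). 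As $\mathfrak{P}$ is unramified we have $\mathfrak{P}\mathcal{O}_L=\mathfrak{M}_1\cdots\mathfrak{M}_g$, whence $\Pi_{\mathfrak{P}^f}(L/K)=j_{L/K}(\mathfrak{P})$ and its class equals $\epsilon_{L/K}([\mathfrak{P}])\in\epsilon_{L/K}(\Cl(K))$. Since such classes generate $\Po(L/K)_{nr}$, we get $\Po(L/K)_{nr}\subseteq\epsilon_{L/K}(\Cl(K))$, and the equivalence just proved yields $\Ost(L/K)_{nr}=\{0\}$.

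Part~(ii) is immediate from the definition: if $\epsilon_{L/K}(\Cl(K))=\{0\}$ then the intersection in the denominator is trivial, so $\Ost(L/K)_{nr}=\Po(L/K)_{nr}/\{0\}=\Po(L/K)_{nr}$. For part~(iii), the triviality of $\Ost(L/K)_{nr}$ is the ``in particular'' clause of part~(i) applied to the Galois extension $L/K$. The identity $\Po(L/K)_{nr}=\Po(L/K)$ holds because, once every finite prime of $K$ is unramified in $L$, the unramified-prime generators defining $\Po(L/K)_{nr}$ coincide with the full generating set defining $\Po(L/K)$. Finally $\Po(L/K)=\epsilon_{L/K}(\Cl(K))$ is exactly~\eqref{equation, relative Polya is image epsilon in unramified case}: with every $e_{\mathfrak{P}(L/K)}=1$, the middle term $\bigoplus_{\mathfrak{P}}\mathbb{Z}/e_{\mathfrak{P}(L/K)}\mathbb{Z}$ of the sequence~\eqref{equation, main exact sequence} vanishes, forcing $\Po(L/K)/\epsilon_{L/K}(\Cl(K))=\{0\}$.

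All the steps are short manipulations of the definitions; the one point that needs genuine care---and which I regard as the main (if modest) obstacle---is the identification $\Pi_{\mathfrak{P}^f}(L/K)=\mathfrak{P}\mathcal{O}_L$ for an unramified prime in the Galois setting, since it relies on the uniformity of residue degrees over a Galois base to guarantee that the Ostrowski product captures every prime above $\mathfrak{P}$ and nothing else.
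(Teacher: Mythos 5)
Your proposal is correct and follows essentially the same route as the paper: part (i) via the elementary fact that $A/(A\cap B)$ is trivial iff $A\subseteq B$ together with the identification $\Pi_{\mathfrak{P}^{f}}(L/K)=\mathfrak{P}\mathcal{O}_L$ for an unramified prime in a Galois extension, part (ii) directly from the definition, and part (iii) from the exact sequence \eqref{equation, main exact sequence} with all ramification indices equal to $1$. The only (immaterial) difference is that you spell out the norm/residue-degree bookkeeping behind $\Pi_{\mathfrak{P}^{f}}(L/K)=\mathfrak{P}\mathcal{O}_L$ in more detail than the paper does.
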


\begin{proof}
	$(i)$. The first assertion is obvious by Definition \ref{definition, relative pre-Polya-Ostrowski group}. If $L/K$ is Galois, then for a prime $\mathfrak{p}$ of $K$ unramified in $L$,  all primes of $L$ above $\mathfrak{p}$ have the same residue class degree, say $f_{\mathfrak{p}}$, which implies that
	\begin{equation*}
\Pi_{\mathfrak{p}^{f_{\mathfrak{p}}}}(L/K)=\mathfrak{p} \mathcal{O}_L \in \epsilon_{L/K}(\Cl(K)).
	\end{equation*}
(Note that for the special case $K=\mathbb{Q}$ every Galois number field $L$ is a ``pre-P\'olya'' field, see \cite[$\S$ 6]{Zantema}).

$(ii)$ Immediately follows from Definition \ref{definition, relative pre-Polya-Ostrowski group}. 

$(iii)$ Since $L/K$ is  Galois and unramified, exact sequence \eqref{equation, main exact sequence for the Ostrowski quotient} yields $\Ost(L/K)$ is trivial or equivalently 
\begin{equation*}
\Po(L/K)=\epsilon_{L/K}(\Cl(K)).
\end{equation*}
The equality $\Po(L/K)_{nr}=\Po(L/K)$ holds since $L/K$ is unramified.
	\end{proof}

\begin{example} \label{example, Q(-4027)}
Let $K=\mathbb{Q}(\sqrt{-4027})$ and $\alpha$ be a root of $f(x)=x^3+10x+1$. One can show that $L=K(\alpha)$
is a cyclic unramified extension of $K$, see \cite[Example 2.14]{MR1}. By Lemma \ref{lemma, basic properties of pre-Ostrowski group}, $Ost(L/K)_{nr}$ is trivial and
\begin{equation*}
\Po(L/K)_{nr}=\Po(L/K)=\epsilon_{L/K}(\Cl(K)).
\end{equation*}

Indeed, since $L/K$ is unramified, by exact sequence \eqref{equation, main exact sequence for the Ostrowski quotient} we find
\begin{equation*}
\#\Ker(\epsilon_{L/K})=\#H^1(Gal(L/K),U_L)=\frac{\# \hat{H}^0(Gal(L/K), U_L)}{Q(Gal(L/K),U_L)},
\end{equation*}
where  $Q(Gal(L/K),U_L)$ denotes the the Herbrand quotient  of $U_L$ (as $Gal(L/K)$-module). One can easily show that 
\begin{equation*}
Q(Gal(L/K),U_L)=1/3, \quad \# \hat{H}^0(Gal(L/K), U_L)=1,
\end{equation*}
since $U_K=\{\pm 1\}$ and $N_{L/K}(-1)=-1$ (for more details see proof of Theorem \ref{theorem, generalization of Hilbert's theorem 94}). Therefore the capitulation kernel of  $L/K$ has order three. On the other hand, since $h_K=9$,  we have
\begin{equation*}
\#\Po(L/K)_{nr}=\#\Po(L/K)=\#\epsilon_{L/K}(\Cl(K))=3.
\end{equation*}
Hence $\Po(L/K)$ is a nontrivial proper subgroup of $\Cl(L)$, since $h_L=108$.
\end{example}

\begin{theorem} \label{theorem, for Galois extensions pre-Ostrowski group is generated by ramified primes}
	For a finite Galois extension $L/K$ of number fields,  we have
\begin{equation*}
\Ost(L/K)  \subseteq \frac{\Po(L/K)}{\Po(L/K)_{nr}}.
\end{equation*}
\end{theorem}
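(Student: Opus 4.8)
The plan is to exhibit both groups appearing in the statement as quotients of the single group $\Po(L/K)$ by a nested pair of subgroups, so that the asserted containment reduces to the correspondence theorem. Because $L/K$ is Galois, I would first invoke Remark~\ref{remark, if L/K is Galois then epsilon of Cl(K) is contained in relative Polya group} to record that $\epsilon_{L/K}(\Cl(K)) \subseteq \Po(L/K)$; this collapses the intersection in Definition~\ref{definition, Ostrowski group} and yields the clean description
\[
\Ost(L/K) = \frac{\Po(L/K)}{\epsilon_{L/K}(\Cl(K))}.
\]

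The key step is then the inclusion $\Po(L/K)_{nr} \subseteq \epsilon_{L/K}(\Cl(K))$, which is exactly the Galois assertion in part~$(i)$ of Lemma~\ref{lemma, basic properties of pre-Ostrowski group}. I would re-derive it in one line if desired: for a prime $\mathfrak{p}$ of $K$ unramified in $L$, all primes above $\mathfrak{p}$ share a common residue degree $f_{\mathfrak{p}}$, which is the only value of $f$ for which $\Pi_{\mathfrak{p}^{f}}(L/K) \neq \mathcal{O}_L$; hence $\Pi_{\mathfrak{p}^{f_{\mathfrak{p}}}}(L/K) = \mathfrak{p}\mathcal{O}_L = \epsilon_{L/K}([\mathfrak{p}])$, so every generator of $\Po(L/K)_{nr}$ already lies in $\epsilon_{L/K}(\Cl(K))$, giving the desired inclusion of subgroups.

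With the chain $\Po(L/K)_{nr} \subseteq \epsilon_{L/K}(\Cl(K)) \subseteq \Po(L/K)$ in hand, I would finish by the third isomorphism theorem: the canonical projection
\[
\frac{\Po(L/K)}{\Po(L/K)_{nr}} \longrightarrow \frac{\Po(L/K)}{\epsilon_{L/K}(\Cl(K))} = \Ost(L/K)
\]
is surjective with kernel $\epsilon_{L/K}(\Cl(K))/\Po(L/K)_{nr}$, exhibiting $\Ost(L/K)$ as the stated subquotient of $\Po(L/K)/\Po(L/K)_{nr}$. I do not anticipate a genuine obstacle here: all of the arithmetic input is already packaged in Lemma~\ref{lemma, basic properties of pre-Ostrowski group}, and what remains is bookkeeping with the three nested subgroups. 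The only point that demands care is the reading of the symbol $\subseteq$ in the statement: since $\Ost(L/K)$ arises as a quotient rather than as a literal subgroup of $\Po(L/K)/\Po(L/K)_{nr}$, I would make explicit that the containment is that of a subquotient realized by the canonical surjection above, and not an honest subgroup embedding.
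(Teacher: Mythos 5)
Your proposal is correct and takes essentially the same route as the paper, whose entire proof is the chain of containments $\Po(L/K)_{nr} \subseteq \epsilon_{L/K}(\Cl(K)) \subseteq \Po(L/K)$ that you derive from Lemma \ref{lemma, basic properties of pre-Ostrowski group}$(i)$ and Remark \ref{remark, if L/K is Galois then epsilon of Cl(K) is contained in relative Polya group}. Your closing caveat --- that $\Ost(L/K)$ is realized as a quotient of $\Po(L/K)/\Po(L/K)_{nr}$ via the canonical surjection, so the symbol $\subseteq$ must be read as a subquotient rather than a literal subgroup embedding --- is a fair and worthwhile clarification of the paper's terse notation.
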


\begin{proof}
	Since $L/K$ is Galois, the containments	
	\begin{equation}
	Po(L/K)_{nr} \subseteq \epsilon_{L/K}(\Cl(K)) \subseteq \Po(L/K)
	\end{equation}
	prove the assertions.
\end{proof}

\section*{Acknowledgment}
The authors would like to thank the anonymous referees for their valuable comments and carefully reading the first draft.

\bibliographystyle{amsplain}

\begin{thebibliography}{20}

\baselineskip=.5cm

\bibitem{Brumer-Rosen} A. Brumer, M. Rosen, \textit{Class number and ramification in number fields.} Nagoya Math. J. \textbf{23} (1963), 97--101.


\bibitem{Cahen-Chabert's book} P. J. Cahen, J. L. Chabert \textit{Integer-valued polynomials.} Mathematical Surveys and Monographs \textbf{48}, Amer. Math. Soc., Providence, 1997.

\bibitem{ChabertI} J. L. Chabert, \textit{From P\'olya fields to P\'olya groups (I) Galois extensions.} J. Number Theory \textbf{203} (2019), 360--375.


\bibitem{Chabert II} J. L. Chabert, E. Halberstadt, \textit{From P\'olya fields to P\'olya groups (II): non-Galois number fields.} J. Number Theory \textbf{220} (2021), 295--319.


\bibitem{Cornell} G. Cornell, \textit{Relative genus theory and the class group of $\ell$-extensions}. Trans. Amer. Math. Soc. \textbf{277 }(1983), 421--429.



\bibitem{Hilbert's book} D. Hilbert, \textit{The theory of algebraic number fields. (English summary) Translated from the German and with a preface by Iain T. Adamson. With an introduction by Franz Lemmermeyer and Norbert Schappacher.} Springer-Verlag, Berlin, 1998.


\bibitem{Iwasawa} K. Iwasawa, \textit{A note on the group of units of an algebraic number field.} J. Math. Pures Appl. \textbf{35} (1956), 189--192.

\bibitem{IwasawaII} K. Iwasawa,\textit{A note on class numbers of algebraic number fields.} Abh. Math. Semin. Univ. Hambg. \textbf{20} (1956), 257--258.

\bibitem{Khare} C. Khare, D. Prasad, \textit{On the Steinitz module and capitulation of ideals.} Nagoya Math. J. \textbf{60} (2000), 1--15.

\bibitem{Kisilevsky} H. Kisilevsky, \textit{Some results related to Hilbert's Theorem 94.} J. Number Theory \textbf{2} (1970), 199-206.

\bibitem{Lang} S. Lang, \textit{Algebraic Number Theory}. Springer, New York, 1986.



\bibitem{Leriche 2014} A. Leriche, \textit{About the embedding of a number field in a P\'olya field.} J. Number Theory \textbf{45} (2014), 210--229.

\bibitem{MaarefparvarThesis} A. Maarefparvar, \textit{P\'olya fields of small degree and low ramification.} PhD thesis (in Persian), Tarbiat Modares University, December 2017.

\bibitem{MR1}A. Maarefparvar, A. Rajaei, \textit{P\'olya $S_3$-extensions of $\mathbb{Q}$}. Proc. Roy. Soc. Edinburgh Sect. A \textbf{149} (2019), 1421--1433.


\bibitem{MR2} A. Maarefparvar, A. Rajaei, \textit{Relative P\'olya group and P\'olya dihedral extensions of $\mathbb{Q}$}. J. Number Theory \textbf{207} (2020), 367--384.

\bibitem{Masley} J. M. Masley, H. L. Montgomery, \textit{Cyclotomic fields with uinque factorization.}  J. Reine Angew. Math. \textbf{286} (1976), 248--256.


\bibitem{Ostrowski} A. Ostrowski, \textit{\"Uber ganzwertige Polynome in algebraischen Zahlk\"orpern.} J. Reine Angew. Math.\textbf{149} (1919), 117--124.



\bibitem{Polya} G. P\'olya, \textit{\"Uber ganzwertige Polynome in algebraischen Zahlk\"orpern.} J. Reine Angew. Math. \textbf{149} (1919), 97--116. 

\bibitem{Ehsan Thesis} E. Shahoseini, \textit{On Class groups and Brumer-Rosen's exact sequence.} PhD thesis (in Persian), Tarbiat Modares University, June 2021.

\bibitem{Spickermann} M. Spickermann, \textit{Ganzwertige Polynome und Galois-invariante Ideale {\"u}ber algebraischen Zahlk{\"o}rpern}, dissertation, Westf{\"a}lische Wilhelms-University, M{\"u}nster, 1986.


\bibitem{Tannaka} T. Tannaka, \textit{A generalized principal ideal theorem and a proof of a conjecture of Deuring.} Annals of Mathematics (1958). 547--589.

\bibitem{Terada} F. Terada, \textit{A principal ideal theorem in the genus field.} Tohoku Math. J.  \textbf{23} (1971), 697–-718.

\bibitem{Van der Waal} van der Waall H., \textit{Galois cohomology and class groups.} Master’s degree thesis, Universiteit van Amsterdam, 1994. \url{http://www.cecm.sfu.ca/~awaall}.

\bibitem{Zantema} H. Zantema, \textit{Integer valued polynomials over a number field.} Manuscripta Math. \textbf{40} (1982), 155--203. 

\end{thebibliography}

\end{document}